\numberwithin{equation}{section}
\theoremstyle{definition}
\newtheorem{df}{Definition}
\newtheorem{rem}[df]{Remark}
\theoremstyle{plain}
\newtheorem{thm}[df]{Theorem}
\newtheorem{cor}[df]{Corollary}
\newtheorem{lemma}[df]{Lemma}
\DeclareMathOperator{\id}{id}
\DeclareMathOperator{\deukl}{d_{eukl}}
\DeclareMathOperator{\dtaxi}{d_{taxi}}
\DeclareMathOperator{\metric}{d}
\DeclareMathOperator{\inter}{int}
\begin{document}

\hyphenation{ne-cessa-ri-ly fa-mi-lies con-ti-nuum Mathe-ma-tical cha-racte-rize To-po-logy con-ti-nuous mi-ni-mal equi-valen-tly non-de-gene-rate non-homeo-mor-phic stu-died Theo-rem den-drites lo-cally au-thors com-pacti-fica-tions de-fi-ni-tion de-fi-ni-tions exam-ples iso-la-ted con-di-tions Eucli-dean in-equa-li-ty con-nect-ed Fur-ther-more Frac-tals con-ti-nua Mathe-ma-tics dis-con-nect-ed met-ric homeo-mor-phisms dy-na-mical}

%%%-------------------------------------------------- TEXT

\title{Minimal sets on continua with a dense free interval}
\author{Michaela Mihokov\'{a}}

\address{Department of Mathematics, Faculty of Natural Sciences, Matej Bel University,
	Tajovsk\'{e}ho 40, 974 01 Bansk\'{a} Bystrica, Slovakia}

\email{michaela.mihokova@umb.sk}

%\thanks{This work was supported by VEGA grant 1/0158/20.}

\subjclass[2020]{Primary 37B05; Secondary 37B45}

\keywords{Minimal set, free interval, locally connected continuum, retract, local dendrite.}

\begin{abstract} 
	We study minimal sets on continua $X$ with a dense free interval $J$ and a locally connected remainder. This class of continua includes important spaces such as the topologist's sine curve or the Warsaw circle. In~the~case when minimal sets on the remainder are known and the remainder is connected, we obtain a full characterization of the topological structure of minimal sets. In~particular, a full characterization of minimal sets on $X$ is given in~the~case when~$X\setminus J$ is a local dendrite.
\end{abstract}

\maketitle

\thispagestyle{empty}

\section{Introduction and main results}

A \emph{(discrete) dynamical system} is an ordered pair $(X,f)$ where $X$ is a compact metrizable space and $f\colon X\to X$ is a continuous -- not necessarily invertible -- map. We say that $(X,f)$ is a \emph{minimal dynamical system} and $f\colon X\to X$ is a \emph{minimal map} if $(X,f)$ is a~dynamical system and there is no nonempty, proper, closed subset $M\subseteq X$ that is $f$-invariant (i.e., $f(M)\subseteq M$). A set $M\subseteq X$ is a~\emph{minimal set} of~$(X,f)$ if $M$ is nonempty, closed, and $f$-invariant and there is no proper subset of~$M$ having these three properties. Equivalently, $M\subseteq X$ is a \emph{minimal set} of $(X,f)$ if $M$ is nonempty, closed, and $f$-invariant and $(M, f\vert_M)$ is a minimal dynamical system. If $M\subseteq X$ is a minimal set of some dynamical system on $X$, we call $M$ a~\emph{minimal set on the space} $X$.

Birkhoff proved that any dynamical system has a minimal set. In~\cite[Theorem~B]{DSS13}, it was proved that in a compact metrizable space with a free interval $J$, every minimal set intersecting $J$ is either finite or a finite union of disjoint circles or a~(nowhere dense) \emph{cantoroid}, i.e., a compact metrizable space without isolated points in which degenerate components are dense.

A complete characterization of the topological structure of minimal sets is known only for a few classes of compact metrizable spaces listed in the next theorem (recall that a \emph{local dendrite} is a locally connected continuum containing only finitely many circles).

\begin{thm}\label{V:known_characterization}
	\leavevmode
	\renewcommand{\labelenumi}{(\roman{enumi})}
	\begin{enumerate}[label=(\roman*), ref=(\roman*)]
		\item\label{it:interval} Let X be a zero-dimensional compact metrizable space or a compact interval. Then $M\subseteq X$ is a minimal set on $X$ if and only if $M$ is either a~finite set or a~Cantor set; see, e.g., \cite[p.~92]{BC92}.
		\item\label{it:circle} Let X be a circle. Then $M\subseteq X$ is a minimal set on $X$ if and only if $M$ is either a finite set or a Cantor set or the entire $X$.
		\item\label{it:graph} Let X be a graph. Then $M\subseteq X$ is a minimal set on $X$ if and only if $M$ is either a finite set or a Cantor set or a union of~finitely many pairwise disjoint circles \cite{BHS03}.
		\item\label{it:locDend} Let X be a local dendrite. Then $M\subseteq X$ is a minimal set on $X$ if and only if $M$ is either a finite set or a cantoroid or a union of~finitely many pairwise disjoint circles \cite{BDHSS09}.
	\end{enumerate}
\end{thm}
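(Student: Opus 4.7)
The plan is to handle the four assertions in order of increasing difficulty and to use, throughout, the standard dichotomy that any minimal set $M$ is either finite (a single periodic orbit) or perfect: the set of non-isolated points of $M$ is closed and $f$-invariant, so if $M$ has an isolated point it must be finite. I would then invoke Theorem~B of \cite{DSS13}, which is recalled just above the statement, to do the heavy lifting for parts (iii) and (iv).

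For (i), in the zero-dimensional case the dichotomy alone suffices: an infinite $M$ is nonempty, perfect, compact, metrizable and zero-dimensional, hence a Cantor set by Brouwer's classical characterization. On a compact interval one must additionally rule out $M$ containing a nondegenerate subarc; this follows because the restriction of $f$ to such a subarc would admit a fixed point, forcing a proper closed invariant subset and contradicting minimality of an infinite $M$. Thus $M$ is perfect and nowhere dense, hence totally disconnected, hence a Cantor set. Part (ii) is then obtained by a case split: if $\inter M=\emptyset$, the argument of (i) applies locally on arcs of the circle; if $\inter M\neq\emptyset$, minimality together with connectedness of the circle forces $M$ to equal the whole circle, as in the classical analysis of circle endomorphisms.

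For parts (iii) and (iv) I would cover the graph, respectively the local dendrite, by finitely (respectively at most finitely) many free open arcs whose complement is the finite, respectively at most countable, set $B$ of branch and end points. A minimal set either lies entirely in $B$ and is then finite by (i), or it meets some free interval, in which case Theorem~B of \cite{DSS13} forces it to be finite, a finite union of pairwise disjoint circles, or a nowhere dense cantoroid. For the graph statement one additionally argues that any such cantoroid must be totally disconnected, hence a Cantor set: a nondegenerate arc component combined with the density of degenerate components is incompatible with the finite branching of a graph, so no genuinely non-Cantor cantoroid can be minimal there.

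The main obstacle, which is precisely the content of \cite{BHS03} and \cite{BDHSS09}, is the realization direction: exhibiting minimal maps on each listed topological type, namely periodic orbits, irrational rotations on circles, odometers or Toeplitz-like subshifts on Cantor sets, and inverse limit or almost one-to-one extension constructions for the exotic cantoroids on local dendrites. The characterization direction beyond \cite[Theorem~B]{DSS13} would further require careful local analysis at branch points of finite order and an argument that a minimal set visiting a branch point returns to it compatibly with the cantoroid structure; this is the step I expect to be the most technical in a self-contained proof of (iv).
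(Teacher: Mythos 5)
The paper does not prove this theorem at all: it is stated as a survey of known results, with (i) attributed to \cite[p.~92]{BC92}, (iii) to \cite{BHS03}, (iv) to \cite{BDHSS09}, and (ii) treated as classical. So any comparison is between your sketch and the cited literature rather than an in-paper argument. Your outline for (i)--(iii) is essentially the standard one (finite-or-perfect dichotomy, Brouwer's characterization of the Cantor set, nowhere density of infinite minimal sets on the interval, and for graphs the decomposition into the finite set of branch/end points plus finitely many free intervals followed by \cite[Theorem~B]{DSS13}), modulo one imprecision: a nondegenerate subarc of $M$ need not be $f$-invariant, so ``the restriction of $f$ to such a subarc would admit a fixed point'' is not literally correct; one has to pass to an iterate $f^n$ that maps the subarc across itself and extract a periodic point that way.

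The genuine gap is in (iv). You reduce the local dendrite case to \cite[Theorem~B]{DSS13} by covering the space with free open arcs ``whose complement is the at most countable set $B$ of branch and end points.'' This fails for local dendrites: a dendrite may have a dense set of branch points and an uncountable set of end points (e.g.\ the universal Wa\.{z}ewski dendrite), in which case it contains no free interval whatsoever, and more generally the complement of the union of all free intervals can be an uncountable, even somewhere dense, subcontinuum. Consequently a minimal set can avoid every free interval without being finite, and Theorem~B of \cite{DSS13} gives no information about it. This is precisely the case that makes \cite{BDHSS09} a substantial separate piece of work (its main technical content is the analysis of ``almost totally disconnected'' minimal systems on densely branching dendrites, not the local analysis at finitely many branch points that you anticipate). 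Your plan as written proves (iv) only for local dendrites whose branch and end points form a finite, or at least closed and countable, set -- essentially the graph-like case -- and the remaining case needs an entirely different argument.
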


A partial description of minimal sets is available for fibre-preserving continuous maps on graph bundles \cite{KST14} and for monotone continuous maps on local dendrites \cite{Abd15} and on regular curves \cite{DM21}.

In higher dimensional spaces, the topological structure of minimal sets is much more complicated. Finding a complete topological characterization of minimal sets of continuous selfmaps of $2$-manifolds is a very difficult problem; for a partial result, see \cite{KST08}. However, in the case of homeomorphisms, a classification of minimal sets is available on~a~$2$-torus \cite{JKP13} and extended to surfaces \cite{PX14}.

\subsection{Continua with a dense free interval}
By a \emph{free interval} in~a~space we mean an open set homeomorphic to the real interval $(0,1)$. A~\emph{ray} is a space homeomorphic to the interval $[0,\infty)$.

In this paper, we are focused on~a~special class of continua. Let $\mathcal{C}$ denote the~class of all continua $X$ that can be written in the form
\begin{equation*}
	X=L\cup J\cup R
\end{equation*}
such that
\begin{itemize}
	\item $J$ is a free interval dense in $X$,
	\item $L$, $R$ are (nowhere dense) locally connected continua disjoint with $J$ such that, for some rays $J_L$ and $J_R$ with $J=J_L\cup J_R$, $L\cup J_L$ is a compactification of $J_L$, and $R\cup J_R$ is a compactification of $J_R$.
\end{itemize}

Clearly, every such $X$ is a compactification of $J$ with the remainder $X\setminus J=L\cup R$. Throughout this paper, we also say that $L,R$ are the (left and right) remainders of~$J$. For any $X=L\cup J\cup R$ from~$\mathcal{C}$ the~remainders $L$ and $R$ are -- up to the~reverse order -- uniquely defined by $J$.

\begin{rem}\label{P:imagine}
	 We can identify the free interval $J$ with the real interval $(0,1)$ such that $L$ and $R$ are $\bigcap_{\varepsilon>0}\overline{(0,\varepsilon)}$ and $\bigcap_{\varepsilon>0}\overline{(1-\varepsilon,1)}$, respectively. The symbols $\overline{(0,\varepsilon)}$ and $\overline{(1-\varepsilon,1)}$ denote the closures of~the~sets $(0,\varepsilon)$ and $(1-\varepsilon,1)$ in $X$, respectively.
\end{rem}

The following statement and its corollary state that the class of continua $\mathcal{C}$ is very broad. In Figure~\ref{fig:Priklady_z_C}, there are four examples of such continua. In fact, $\mathcal{C}$ includes some important spaces, such~as~the~Warsaw circle (see Figure~\ref{fig:Priklady_z_C}(a)), the~topologist's sine curve (Figure~\ref{fig:Priklady_z_C}(b)), or -- when $L,R$ are singletons -- an arc or a~circle.

\begin{lemma}[\cite{MM14}]
	For each nondegenerate continuum~$P$ there are uncountably many topologically distinct (i.e., nonhomeomorphic) compactifications of a ray each with $P$ as~the remainder.
\end{lemma}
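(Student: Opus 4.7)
The plan is to construct an explicit uncountable family of pairwise nonhomeomorphic compactifications of the ray $[0,\infty)$ each having remainder $P$. As a first step, I would exhibit at least one such compactification by an embedding argument. Fix a compatible metric on $P$ and embed $P$ as a closed subset of the Hilbert cube $Q$. Using the fact that a compact connected metric space is $\varepsilon$-chainable for every $\varepsilon>0$, I choose a sequence $(p_n)_{n\in\mathbb{N}}$ in $P$ that is dense and satisfies $d(p_n,p_{n+1})\to 0$. Then I define an injective continuous path $r\colon[0,\infty)\to Q$ with $r(n)=p_n$ whose restriction to each $[n,n+1]$ is a small perturbation of the straight-line segment from $p_n$ to $p_{n+1}$, chosen so that the arcs $r([n,n+1])$ are pairwise disjoint except at consecutive endpoints and have diameters tending to zero. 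The shrinking-diameter condition forces every accumulation point of $r([0,\infty))$ to lie in the closure of $\{p_n\}$, that is, in $P$; hence $X_{0}=\overline{r([0,\infty))}\subseteq Q$ is a compactification of the ray $r([0,\infty))$ with remainder exactly $P$.

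Next, I would produce an uncountable family of modifications of $X_0$. Since $P$ is nondegenerate, fix distinct points $a,b\in P$. For each sequence $s=(s_k)_{k\in\mathbb{N}}\in\{0,1\}^{\mathbb{N}}$, modify the construction of $r$ by inserting at step $k$ a short detour whose image has diameter at most $2^{-k}$ and which passes within distance $2^{-k}$ of $a$ if $s_k=0$ and within distance $2^{-k}$ of $b$ if $s_k=1$. The resulting compactification $X_s$ still has remainder $P$ by the same shrinking-diameter argument, and the construction yields continuum-many candidates $X_s$.

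The main obstacle is showing that uncountably many of the $X_s$ are pairwise nonhomeomorphic. A homeomorphism $h\colon X_s\to X_{s'}$ need not preserve the parametrization of the ray or the embedding into $Q$, so distinct choices of $s$ do not automatically yield nonhomeomorphic $X_s$. To overcome this I would attach to each $X_s$ a purely topological invariant reflecting the asymptotic frequency with which the ray accumulates close to $a$ versus close to $b$: for a decreasing basis $(U_n)$ of open neighborhoods of $a$ in $X_s$, the sequence $k_n$ counting suitably chosen arc components of $U_n\cap r([0,\infty))$ depends, up to tail equivalence, on the asymptotic density of zeros in $s$. Since the asymptotic density takes uncountably many distinct values in $[0,1]$, passing to the subfamily indexed by sequences $s$ whose zero-density realizes pairwise different values yields the desired uncountable collection of pairwise nonhomeomorphic compactifications of the ray each with remainder $P$.
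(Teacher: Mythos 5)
This lemma is not proved in the paper at all; it is quoted verbatim from \cite{MM14}, so any comparison is really with that reference. Your first step already has a (fixable) defect: you set $r(n)=p_n\in P$, so the ray meets $P$. Then $\overline{r([0,\infty))}\setminus r([0,\infty))$ is $P\setminus\{p_n\colon n\in\mathbb{N}\}$, which is not compact and in particular is not $P$; moreover the image of $r$ accumulates on itself at the points $p_n$, so it is neither open in its closure nor homeomorphic to $[0,\infty)$ in the subspace topology. To get a genuine compactification you must keep the ray disjoint from $P$, e.g.\ embed $P$ in $Q\times\{0\}\subseteq Q\times[0,1]$ and run the connecting arcs at positive heights tending to $0$. (Also, ``$\varepsilon$-chainable for every $\varepsilon$'' in the usual sense characterizes chainable continua, a restrictive class; what you actually need is only that a connected metric space is well-chained, which does give a dense sequence with $d(p_n,p_{n+1})\to 0$.)

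The serious gap is the nonhomeomorphism argument, which is the entire content of the lemma. Because the ray must be dense in $X_s$ and $(p_n)$ is dense in $P$, the ray enters every neighborhood $U_n$ of $a$ (and of $b$) infinitely often \emph{for every} $s$, independently of the inserted detours; so the count of arc components of $U_n\cap r([0,\infty))$ is infinite for all $n$ and all $s$, and it is not explained how a finite ``frequency'' is to be extracted from it, nor how detour visits are to be distinguished topologically from the unavoidable ordinary visits. Even granting such a count, a homeomorphism $X_s\to X_{s'}$ restricts to a monotone reparametrization of the ray that can interleave the two kinds of visits arbitrarily, and asymptotic density of a $0$--$1$ sequence is notoriously not invariant under such reindexings; you give no argument that your quantity is a topological invariant, and there is real reason to doubt it is. As it stands the proposal produces continuum many candidates $X_s$ but proves nothing about how many homeomorphism types occur among them, so the statement of the lemma is not established. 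If you want a self-contained proof you need a genuinely rigid invariant (this is exactly the delicate point that \cite{MM14} is designed to handle), or you should simply cite \cite{MM14} as the paper does.
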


\begin{cor}
	For all locally connected continua $L^*,R^*$ where at least one of them is nondegenerate there are uncountably many topologically distinct spaces $X=L\cup J\cup R$ belonging to~$\mathcal{C}$ such that
	\begin{itemize}
		\item $L$ is homeomorphic to $L^*$,
		\item $R$ is homeomorphic to $R^*$.
	\end{itemize}
\end{cor}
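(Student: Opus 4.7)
The plan is to reduce the corollary to the preceding lemma by fixing a compactification on one side and varying it on the other. Without loss of generality assume $R^{*}$ is nondegenerate. First I would choose, once and for all, a compactification $K_L = L\cup J_L$ of a ray $J_L$ with remainder $L$ homeomorphic to $L^{*}$: if $L^{*}$ is a singleton, take $K_L$ to be a compact arc; otherwise apply the preceding lemma. Then I apply the preceding lemma to $P=R^{*}$ to obtain an uncountable family $\{K_\alpha=R_\alpha\cup J_R\}_{\alpha\in A}$ of pairwise non-homeomorphic compactifications of a ray $J_R$, each with remainder $R_\alpha$ homeomorphic to $R^{*}$. For every $\alpha\in A$, glue $K_L$ and $K_\alpha$ by identifying the non-limit endpoints of the two rays; the resulting continuum
\[
X_\alpha = L\cup J\cup R_\alpha,\qquad J=J_L\cup J_R,
\]
lies in $\mathcal{C}$ with the prescribed remainders, since $J$ is open, dense, and homeomorphic to $(0,1)$, while $L,R_\alpha$ are nowhere dense locally connected continua disjoint from $J$.

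The remaining task is to extract an uncountable pairwise non-homeomorphic subfamily of $\{X_\alpha\}_{\alpha\in A}$, for which it suffices to prove that $\alpha\mapsto[X_\alpha]$ is injective. Suppose $h\colon X_\alpha\to X_\beta$ is a homeomorphism. The central observation is that the dense free interval $J$ is a topological invariant of $X_\alpha$: it coincides with the set of points admitting an open neighborhood in $X_\alpha$ homeomorphic to $\mathbb{R}$. Points of $J$ clearly qualify; for a point $p$ of $L\cup R_\alpha$ the ray $J_L$ (resp.\ $J_R$) accumulates on \emph{all} of $L$ (resp.\ $R_\alpha$), so every neighborhood of $p$ in $X_\alpha$ decomposes into a remainder piece through $p$ together with infinitely many small arcs of the accumulating ray, which is incompatible with the ``two components after deleting a point'' signature of $(0,1)$. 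Hence $h(J)=J$, and $h$ maps $X_\alpha\setminus J=L\sqcup R_\alpha$ bijectively onto $X_\beta\setminus J=L\sqcup R_\beta$, either preserving the left/right labels (Case~1) or swapping them (Case~2).

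In Case~1, $h$ carries the right half $K_\alpha$ of $X_\alpha$ homeomorphically onto the right half $K_\beta$ of $X_\beta$ (the accumulation end of $J_R$ in $X_\alpha$ lands on $R_\beta$ since $h(R_\alpha)=R_\beta$), so $K_\alpha\cong K_\beta$ and therefore $\alpha=\beta$ by pairwise distinctness of the family. In Case~2, $h$ sends the right half of $X_\alpha$ onto the left half of $X_\beta$ and vice versa, yielding $K_\alpha\cong K_L\cong K_\beta$; pairwise distinctness again forces $\alpha=\beta$. Injectivity is established, and uncountably many of the $X_\alpha$ are pairwise non-homeomorphic.

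The main obstacle will be the rigorous topological identification of $J$ inside $X$. The remainders $L,R_\alpha$ are only required to be locally connected continua, so they may themselves contain free intervals (for instance when $L$ is an arc), and one cannot invoke ``$1$-manifold point'' reasoning inside the remainders alone. The key fact to exploit is that in the ambient space $X$, the rays $J_L,J_R$ accumulate on \emph{all} of the corresponding remainder, which rules out any ambient $\mathbb{R}$-neighborhood at a remainder point. Once this identification of $J$ is secured, the counting argument above, together with the fact noted in the paper that $L$ and $R$ are determined by $J$ up to order, finishes the proof.
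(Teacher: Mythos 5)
Your proposal is essentially the argument the paper intends: the corollary is stated without proof, as an immediate consequence of the cited lemma of Mart\'{i}nez-de-la-Vega and Minc, and your plan (fix one compactification $K_L$ with remainder $L^*$, vary the other side over the uncountable family supplied by the lemma, glue the two rays at their non-limit end points, and then show that a homeomorphism $X_\alpha\to X_\beta$ forces $K_\alpha$ and $K_\beta$ to be homeomorphic) is the natural way to fill in the details. The one step you flag yourself -- the topological identification of $J$ inside $X_\alpha$ -- is indeed where care is needed, and your ``two components after deleting a point'' heuristic is not quite the right invariant; the clean argument is via nowhere density: if a point $p$ of a nondegenerate remainder $L$ had an open neighborhood $U\cong\mathbb{R}$, then $U\cap L$ would be a nonempty relatively open subset of the Peano continuum $L$, hence would contain an arc, which sitting inside $U\cong\mathbb{R}$ would have nonempty interior in $X$, contradicting the fact that $L$ is nowhere dense (and if $L$ is a singleton, its point is an end point of the arc $L\cup J_L$ and visibly has no $\mathbb{R}$-neighborhood). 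With that fixed, your Case~1/Case~2 analysis is sound, modulo the minor observation that $h(K_\alpha)$ equals $R_\beta$ together with a possibly different terminal sub-ray of $J$, which is homeomorphic to $K_\beta$ by a reparametrization of the ray that is the identity near the remainder.
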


\begin{figure}[h]
	\centering
	\includegraphics[width=1\textwidth]{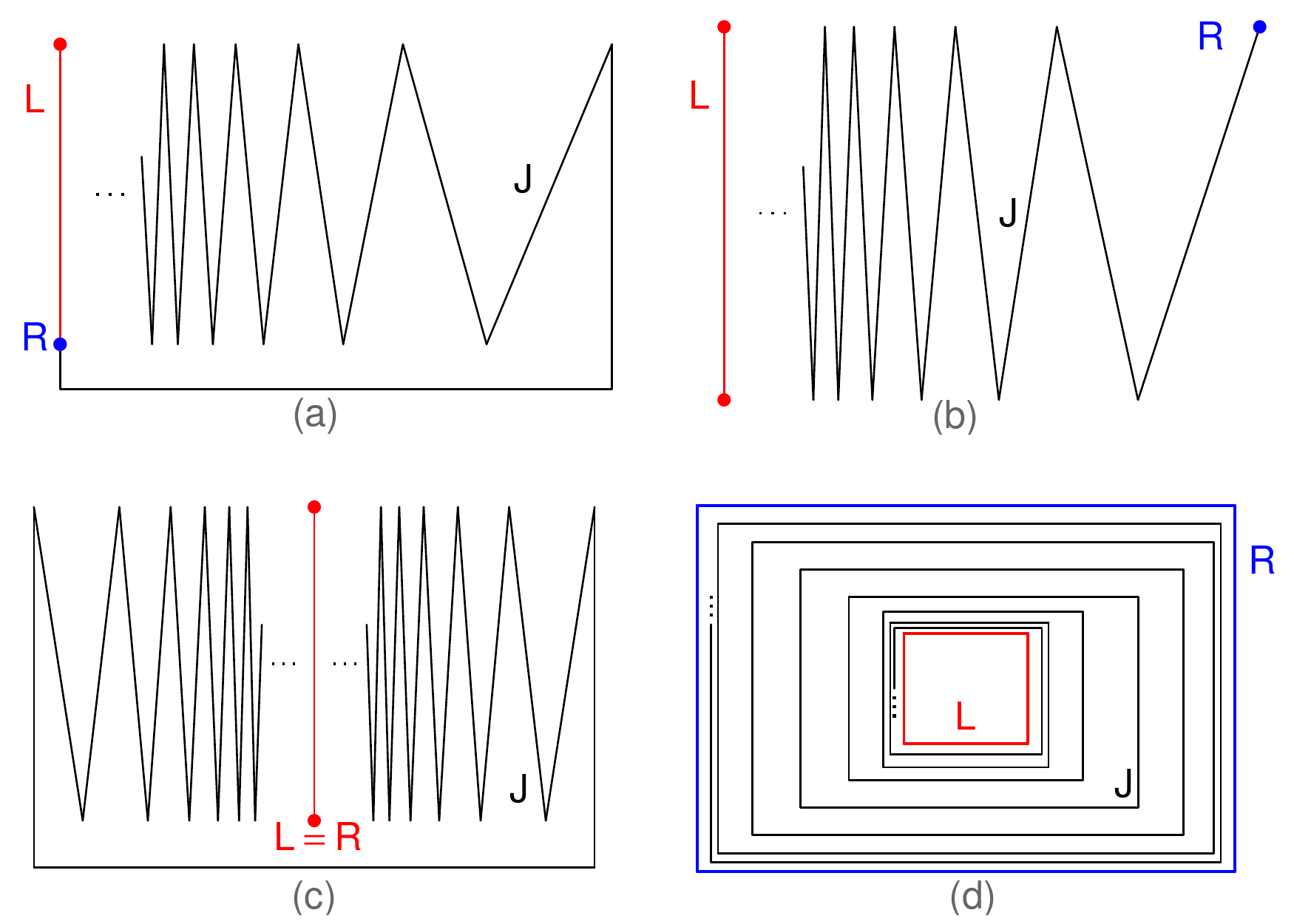}
	\caption[]{\label{fig:Priklady_z_C} Examples of continua belonging to $\mathcal{C}$.
	}
\end{figure}

Since the remainders $L,R$ of every $X\in\mathcal{C}$ are locally connected continua, they are also path connected. Thus the path components of $X$ can be described with~respect to the number of~its nondegenerate remainders.

\begin{rem}\label{P:P-C}
Up to the symmetry, for any $X=L\cup J\cup R$ from $\mathcal{C}$ exactly one of~the~following three cases is true:

\renewcommand{\labelenumi}{(\alph{enumi})}
\begin{enumerate}[label=(\alph*), ref=(\alph*)]
	\item both $L, R$ are singletons, and so $X$ is path connected,
	\item $L$ is a nondegenerate locally connected continuum and $R$ is a~singleton, and~hence:
	\begin{itemize}
		\item  if $L\cap R\neq\emptyset$ (i.e., $R\subset L$), then $X$ is path connected;
		\item if $L, R$ are disjoint, then the path components of $X$ are $L$ and $J\cup R$,
	\end{itemize}
	\item\label{it:P-C} $L, R$ are nondegenerate locally connected continua, and hence:
	\begin{itemize}
		\item if $L\cap R\neq\emptyset$, then the path components of $X$ are $J, L\cup R$;
		\item if $L, R$ are disjoint, then the path components of $X$ are $J, L, R$.
	\end{itemize}
\end{enumerate}
\end{rem}

\begin{rem}\label{P:P-Ckompakt}
	If $P$ is the path component of $X$ containing $J$, then $X\setminus P$ is compact. (Indeed, by Remark~\ref{P:P-C}, $X\setminus P$ is one of $\emptyset, L, R, L\cup R$.)
\end{rem}

\subsection{Dynamics on spaces with a dense free interval}
Since we are interested in studying minimal sets on spaces with a dense free interval, let us refer to some results on this topic. As noted above, the dynamics of continuous maps on~compact metrizable spaces containing a free interval was studied and a trichotomy for~minimal sets was proved in \cite{DSS13}. In \cite{HKO11}, the authors showed that if $X$ is a~compact metrizable space with a free interval, then every totally transitive continuous map $f\colon X\to X$ with dense periodic points is strongly mixing. Furthermore, a topological space contains a free interval if and only if it contains a free arc. Note that an \emph{arc} is a~set homeomorphic to the closed unit interval $[0,1]$, and a~\emph{free arc} in~a~topological space $X$ is an arc that is a closure of a free interval in $X$. No~locally connected continuum with a free arc admits an expansive homeomorphism \cite{Kaw88}. The notion of a free interval was also used in \cite{DHMSS19}, where the authors studied minimality for continuous actions of~abelian semigroups on compact Hausdorff spaces with such an interval.

The topologist’s sine curve, the Warsaw circle, or spaces homeomorphic to them also belong to $\mathcal{C}$. Using a compactification of the nonnegative reals whose remainder is the topologist's sine curve, some results about growths of Stone-\v{C}ech compactifications were proved in \cite{Lev77}. In \cite{SU12}, a characterization of minimal sets on~the~Warsaw circle $W$ was given. Some dynamical properties of continuous maps on~$W$ were also studied in \cite{ZPQY08} or in \cite{XYZH96}, where the authors showed that the~closure of the set of periodic points coincides with the closure of the set of recurrent points for every dynamical system on $W$ (i.e., $W$ has the periodic-recurrent property). In~\cite{Prz13}, it was proved that Milnor–Thurston homology groups for the Warsaw circle are trivial except for the zeroth homology group, which is uncountable-dimensional. For further results on the topologist’s sine curve or the Warsaw circle, see, e.g., \cite{LL19, ZZY06}.

\subsection{Main results}
The aim of this paper is to characterize a topological structure of minimal sets on continua $X=L\cup J\cup R$ belonging to $\mathcal{C}$. If the remainders $L,R$ are singletons, then the characterization is easy, since $X$ is homeomorphic to a circle or a compact interval, and~minimal sets on such continua are well-known. The main results of this paper -- involving all possibilities when at least one of the remainders is nondegenerate -- are contained in the following theorem. Before~stating it, let us introduce some notation.

\begin{df}\label{D:M(Y)}
	For a metrizable (not necessarily compact) space $Y$, the symbol $\mathcal{M}(Y)$ denotes the system of all compact sets $M\subseteq Y$ such that there exists a~continuous map $f\colon Y\to Y$ with $f(M)=M$ and $f\vert_M\colon M\to M$ being a minimal dynamical system on $M$.
\end{df}
Thus if $Y$ is compact, then $\mathcal{M}(Y)$ is the system of all minimal sets on~$Y$. In~the~case when $Y$ is noncompact, the sets $M\in\mathcal{M}(Y)$ will be also called minimal (in~fact, they are compact minimal sets of a noncompact dynamical system $(Y,f)$).

\begin{df}\label{D:M(X;Y)}
	For metrizable (not necessarily compact) disjoint spaces $X$ and $Y$, the symbol $\mathcal{M^*}(X; Y)$ denotes the system of all sets $M$ in~$\mathcal{M}(X\sqcup Y)$ such that the cardinality of $M\cap X$ is equal to the cardinality of $M\cap Y$, i.e., $\mathcal{M^*}(X; Y)$ is the system of all compact sets $M\subseteq X\sqcup Y$ such that there exists a continuous map $f\colon X\sqcup Y\to X\sqcup Y$ with $f(M)=M$, the sets $M\cap X$, $M\cap Y$ having the same cardinalities and $f\vert_M\colon M\to M$ being a minimal dynamical system.
\end{df}

\setcounter{df}{0}
\renewcommand{\thedf}{\Alph{df}}
\begin{thm}\label{V:Main}
	Let $X=L\cup J\cup R$ belong to $\mathcal{C}$.
	\begin{enumerate}
		\item\label{it:sing_nondeg_nondisj} If $R$ is a~singleton and $L$ is nondegenerate such that $R\subset L$, then
		\begin{equation*}
		\mathcal{M}(X) = \bigcup\left\{\mathcal{M}\left(L\cup A\right)\colon A\text{ is an arc, } R\subseteq A\subset R\cup J\right\}.
		\end{equation*}
		
		\item\label{it:sing_nondeg_disj} If $R$ is a~singleton and $L$ is nondegenerate such that $L\cap R=\emptyset$, then
		\begin{equation*}
		\mathcal{M}(X) = \mathcal{M}\left(L\right)\sqcup \mathcal{M}\left(J\cup R\right).
		\end{equation*}
		
		\item\label{it:nondeg_nondisj} If $L, R$ are nondegenerate such that $L\cap R\neq\emptyset$, then
		\begin{equation*}
		\mathcal{M}(X) = \mathcal{M}\left(L\cup R\right)\sqcup \mathcal{M}\left(J\right).
		\end{equation*}
		
		\item\label{it:nondeg} If $L, R$ are nondegenerate such that $L\cap R=\emptyset$, then
		\begin{enumerate}
			\item\label{it:subseteq}
			$\mathcal{M}(X) \subseteq \mathcal{M}(J)\sqcup \mathcal{M}\left(L\right)\sqcup \mathcal{M}\left(R\right)\sqcup \mathcal{M^*}\left(L; R\right),$
			\item\label{it:supseteq}
			$\mathcal{M}(X) \supseteq \mathcal{M}(J)\sqcup \mathcal{M}\left(L\right)\sqcup \mathcal{M}\left(R\right)$.
		\end{enumerate}
	\end{enumerate}
\end{thm}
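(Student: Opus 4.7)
The plan is to establish each part by proving forward ($M\in\mathcal{M}(X)$ lies in the claimed system) and reverse (the claimed system is contained in $\mathcal{M}(X)$) inclusions separately. The forward direction uses the path-component structure of $X$ recorded in Remark~\ref{P:P-C}, while the reverse direction constructs continuous extensions of given minimal dynamics.

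For the forward direction, any continuous $f\colon X\to X$ sends each path component of $X$ into a single path component (continuous images of path-connected sets are path-connected), so $f$ induces a dynamics $\phi$ on the finite set of path components, and any minimal $M$ with $f(M)=M$ is constrained by it. In case~(4), I will prove two sub-claims. First, if $M\cap J\neq\emptyset$, then $M\subseteq J$: density of $J$ and continuity, via $f(\overline J)\subseteq\overline{f(J)}$, force $f(J)\subseteq J$, and if additionally $M\cap L\neq\emptyset$, the three options $\phi(L)\in\{L,J,R\}$ each lead to contradictions (the first directly violates minimality, the other two are excluded by a decomposition of $f^2|_M$ into one or two $f^2$-minimal pieces). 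Second, if $M\cap J=\emptyset$ but $M$ meets both $L$ and $R$, a parallel analysis rules out every option for $\phi$ except $\phi(L)=R$, $\phi(R)=L$; then $f$ restricts to a continuous self-map of the clopen disjoint union $L\sqcup R$, and surjectivity of $f|_M$ combined with the Cantor--Bernstein theorem gives $|M\cap L|=|M\cap R|$, placing $M$ in $\mathcal{M}^*(L;R)$. Cases (2) and~(3) follow from the coarser partitions; the key extra observation in case~(2) is that $f(L)\not\subseteq J\cup R$, since otherwise continuity at $y\in L$ along sequences in $J\cup R$ whose $f$-images lie in $L$ (which is disjoint from $J\cup R$) would be impossible. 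Case~(1) is the most delicate, as $X$ is path connected and the partition gives no immediate restriction; here I use Theorem~B of~\cite{DSS13} to show that $M$, being finite, a circle union, or a cantoroid, cannot accumulate densely on $L$, so $M\cap J$ is bounded away from the $L$-end of $J$ and thus lies in some compact arc $A\subset R\cup J$ containing $R$.

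For the reverse direction, given a minimal set $M$ in the target system and a witnessing continuous $g$, I construct $f\colon X\to X$ with $f|_M=g|_M$. When $M\subseteq J$, I interpolate $g|_M$ linearly between consecutive points of $M$, take constant values on the two complementary open rays of $J\setminus[\min M,\max M]$, and extend those constants to $L$ and~$R$; continuity at the remainders is automatic because $f$ is constant on a neighbourhood of each. When $M\subseteq L$, the main tool is a retraction $r\colon X\to L$, after which $f=g\circ r$ works; this retraction is built from the ray-compactification $L\cup J_L$ by choosing $q_n\in J_L$ escaping to $L$ with nearby points $p_n\in L$ and connecting consecutive $p_n$ by paths in $L$ of diameter decaying to zero (available because $L$ is a Peano continuum), then setting $r$ constant in $L$ on $J_R\cup R$. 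Analogous constructions produce extensions witnessing $\mathcal{M}(J\cup R)$, $\mathcal{M}(L\cup R)$ and $\mathcal{M}(L\cup A)$ in cases (2), (3) and~(1). I expect the principal obstacle to be this retraction construction, because $X$ need not be an ANR (consider the topologist's sine curve) and there is no canonical retraction onto $L$; making the choices of $p_n$ and of connecting paths compatible with the metric on $X$ is the crux. A secondary subtlety is the forward argument in case~(1), where path components provide no partition and the cantoroid/finite structure from~\cite{DSS13} must be combined with the geometry of the compactification to confine $M\cap J$ away from~$L$.
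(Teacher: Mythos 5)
Your overall architecture (forward inclusions via the action of $f$ on path components, reverse inclusions via retractions/extensions) matches the paper's, and your case~(4) analysis, including the swap-permutation and cardinality argument, is essentially Theorem~\ref{V:3}. However, there is a genuine gap in your treatment of part~(1), the case $R\subseteq A\subset R\cup J$ with $R\subset L$. You propose to confine $M\cap J$ away from the $L$-end of $J$ using only the trichotomy of \cite[Theorem~B]{DSS13} (finite set, finite union of circles, or nowhere dense cantoroid). That trichotomy does not do the job: the remainder $L$ is itself nowhere dense in $X$, so a nowhere dense cantoroid $M$ is perfectly free, topologically, to contain points of $J$ clustering at the $L$-end together with a nonempty (even all of) $L$; nothing in ``finite / circles / cantoroid'' excludes this configuration. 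The statement you need is genuinely dynamical: for every minimal set $M$ there is a \emph{free arc} $A$ with $M\cap J\subseteq A\subseteq\overline{J}$, which is \cite[Theorem~2]{SU12} (also \cite[Theorem~D]{DHMSS19}); this is the paper's Lemma~\ref{L:SU12}, and it is the essential input, not a corollary of the trichotomy. (Since $L$ is nondegenerate, no free arc can close up over the $L$-end, which is what actually bounds $M\cap J$ away from $L$.) Relatedly, even once $M\subseteq L\cup A$ is known, passing from $M\in\mathcal{M}(X)$ to $M\in\mathcal{M}(L\cup A)$ still requires a retraction of $X$ onto $L\cup A$ (Lemmas~\ref{L:Dug} and~\ref{L:Mretract}); your sketch does not address this direction.

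A secondary problem is your hands-on retraction $r\colon X\to L$. As described --- points $q_n$ escaping to $L$, nearby $p_n\in L$, consecutive $p_n$ joined by paths of small diameter --- the construction fails already for the topologist's sine curve: a point $x\in[q_{n},q_{n+1}]$ of the ray can be far (in the metric of $X$) from both endpoints $q_n,q_{n+1}$, because the subarc oscillates across all of $L$ even when its endpoints are close; then $r(x)$ lies near $p_n$ while $x$ converges to a different point of $L$, so $r$ does not extend to the identity on $L$. The construction can be repaired by additionally requiring that the \emph{diameters of the consecutive subarcs} $[q_{n+1},q_n]$ tend to zero (achievable by subdividing the ray using uniform continuity on compact pieces), after which your estimate $\metric(r(x),x)\to 0$ goes through and you obtain an elementary substitute for the paper's appeal to the Kuratowski--Dugundji extension theorem in Lemma~\ref{L:Dug}. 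You correctly identify this as the crux, but the condition that actually makes it work is missing from your sketch.
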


Since $J$ is a free interval, minimal sets on it are well-known: $\mathcal{M}\left(J\right)$ is the system of all finite and Cantor subsets of $J$; analogously, one can describe $\mathcal{M}\left(J\cup R\right)$ in~the~case~\emph{(\ref{it:sing_nondeg_disj})}. Theorems~\ref{V:Main} and \ref{V:known_characterization} immediately yield the following corollary.

\begin{cor}
	Let $X=L\cup J\cup R$ belong to $\mathcal{C}$. Assume that $L$ and $R$ are local dendrites. Then every minimal set on $X$ is either a finite set or a cantoroid or a~union of finitely many pairwise disjoint circles. 
\end{cor}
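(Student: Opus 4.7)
The plan is to proceed case by case using Theorem~\ref{V:Main}, combined with Theorem~\ref{V:known_characterization}\ref{it:locDend} for minimal sets on local dendrites and Theorem~\ref{V:known_characterization}\ref{it:interval} for minimal sets on intervals. If both $L$ and $R$ are singletons, then $X$ is homeomorphic to an arc or to a circle and the conclusion is immediate from Theorem~\ref{V:known_characterization}\ref{it:interval} and~\ref{it:circle}; otherwise exactly one of the four cases of Theorem~\ref{V:Main} applies.

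In case~\ref{it:sing_nondeg_nondisj}, every $M\in\mathcal{M}(X)$ lies in $\mathcal{M}(L\cup A)$ for some arc $A$ with $R\subseteq A\subset R\cup J$; since $A$ meets the local dendrite $L$ only at the point $R$, the union $L\cup A$ is again a local dendrite, and Theorem~\ref{V:known_characterization}\ref{it:locDend} applies. In case~\ref{it:sing_nondeg_disj}, $\mathcal{M}(L)$ is handled by Theorem~\ref{V:known_characterization}\ref{it:locDend}, while any set in $\mathcal{M}(J\cup R)$ is a compact subset of a space homeomorphic to a half-open interval, hence is contained in a compact subinterval and so is finite or a Cantor set by Theorem~\ref{V:known_characterization}\ref{it:interval}. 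In case~\ref{it:nondeg_nondisj}, $L\cup R$ is the union of two local dendrites meeting in a nonempty (in the setting of $\mathcal{C}$, finite) set, hence itself a local dendrite, and $\mathcal{M}(J)$ consists of finite and Cantor subsets; Theorem~\ref{V:known_characterization}\ref{it:locDend} finishes the case.

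The main step is case~\ref{it:nondeg}, and specifically the analysis of $\mathcal{M^*}(L;R)$. Given $M\in\mathcal{M^*}(L;R)$ with realizing map $f\colon L\sqcup R\to L\sqcup R$, write $A=M\cap L$ and $B=M\cap R$; the equal-cardinality condition forces both to be nonempty. Since $L$ is a connected clopen subset of $L\sqcup R$, continuity of $f$ gives $f(L)\subseteq L$ or $f(L)\subseteq R$, and similarly for $f(R)$. If $f(L)\subseteq L$, then $A$ would be a nonempty proper $f$-invariant subset of $M$, contradicting minimality; hence $f(L)\subseteq R$ and symmetrically $f(R)\subseteq L$. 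Then $f(A)=B$, $f(B)=A$, and a short argument shows that $(A,f^2|_A)$ and $(B,f^2|_B)$ are minimal dynamical systems conjugated by $f|_A\colon A\to B$. Consequently $A\in\mathcal{M}(L)$ and $B\in\mathcal{M}(R)$ are homeomorphic, and by Theorem~\ref{V:known_characterization}\ref{it:locDend} each is either finite, a cantoroid, or a union of finitely many pairwise disjoint circles.

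It remains to observe that the disjoint union $M=A\sqcup B$ of two homeomorphic sets of the same one of these three types is again of one of the three types: two finite sets yield a finite set; two unions of $n$ pairwise disjoint circles yield $2n$ such circles; and two cantoroids yield a compact metrizable space with no isolated points in which degenerate components are dense, hence a cantoroid. The main obstacle is the dichotomy $f(L)\subseteq L$ versus $f(L)\subseteq R$ and the ensuing conjugacy argument pushing the analysis onto $L$ and $R$ separately; the remaining items---verifying that the relevant unions are local dendrites and that the three admissible types are preserved under disjoint union of two homeomorphic copies---are routine.
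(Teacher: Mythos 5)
Your overall strategy matches the paper's (the corollary is derived there directly from Theorems~\ref{V:Main} and~\ref{V:known_characterization}, with cases \emph{(\ref{it:sing_nondeg_nondisj})}--\emph{(\ref{it:nondeg_nondisj})} handled essentially as in Corollaries~\ref{Cor:Varsava}--\ref{Cor:nedegNedisj}), but there is a genuine gap in your treatment of $\mathcal{M^*}(L;R)$ in case~\emph{(\ref{it:nondeg})}. The map $f\vert_A\colon A\to B$ is a continuous surjection intertwining $f^2\vert_A$ with $f^2\vert_B$, i.e.\ a factor map, but nothing makes it injective; so you have not shown that $A$ and $B$ are homeomorphic, nor that they are of the same type. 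Your final step genuinely needs sameness of type: for instance, the disjoint union of a finite family of circles with a Cantor set is none of the three admissible objects (degenerate components fail to be dense near the circles, so it is not a cantoroid). Equal cardinality settles only the finite case; to exclude, say, $A$ a union of circles and $B$ a cantoroid with nondegenerate and degenerate components mixed, you would need something extra --- e.g.\ that a minimal system on a finite union of circles is equicontinuous (minimal circle maps being conjugate to irrational rotations), so that its factor $B$ must again be finite or a finite union of circles --- facts you do not invoke.

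A cleaner repair avoids splitting $M$ at all: join $L$ and $R$ by an arc $I$ meeting them only at its endpoints $p_L\in L$, $p_R\in R$; then $D=L\cup I\cup R$ is a local dendrite, and $f\colon L\sqcup R\to L\sqcup R$ extends to a continuous $F\colon D\to D$ by sending $I$ along any path from $f(p_L)$ to $f(p_R)$ in the path connected space $D$. Then $M$ is a minimal set on the local dendrite $D$, and Theorem~\ref{V:known_characterization}\ref{it:locDend} applies directly. Separately, your parenthetical in case~\emph{(\ref{it:nondeg_nondisj})} that $L\cap R$ is finite ``in the setting of $\mathcal{C}$'' is unsupported --- nothing in the definition of $\mathcal{C}$ bounds $L\cap R$; what is actually needed there is that $L\cup R$ is a local dendrite (equivalently, that the union acquires only finitely many new circles), which deserves at least a remark, although the paper makes the same silent step in Corollary~\ref{Cor:nedegNedisj}.
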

\renewcommand{\thedf}{\arabic{df}}
\setcounter{df}{8}

In fact, we can even give a complete characterization of minimal sets on continua $X\in\mathcal{C}$ with both remainders $L$ and $R$ being local dendrites, except in the case~\emph{(\ref{it:nondeg})} when both remainders are nondegenerate and disjoint (see Corollaries~\ref{Cor:Varsava}, \ref{Cor:TSC} and \ref{Cor:nedegNedisj}). The exceptional case~\emph{(\ref{it:nondeg})} cannot be strengthened even for continua with both $L$ and $R$ being arcs. In fact, we construct continua of such a type for which the~equality in~\emph{(\ref{it:subseteq})} or in~\emph{(\ref{it:supseteq})} holds (see Sections~\ref{Sec:Pr1} and \ref{Sec:Pr2}). We do not know whether there is a continuum belonging to $\mathcal{C}$ for which neither the equality in~\emph{(\ref{it:subseteq})} nor the equality in~\emph{(\ref{it:supseteq})} holds.

The paper is organised as follows. In Section~\ref{Sec:Prelim}, we recall some necessary definitions and facts from topological dynamics. There we also prove some technical lemmas. In Section~\ref{Sec:Singletons}, we recall the characterization of minimal sets on continua with remainders being singletons. Section~\ref{Sec:singleNondeg} addresses characterization of minimal sets on continua with~exactly one nondegenerate remainder, and we prove \emph{(\ref{it:sing_nondeg_nondisj})} and \emph{(\ref{it:sing_nondeg_disj})} from Theorem~\ref{V:Main}. In Section~\ref{Sec:Nondeg}, we prove Theorem~\ref{V:Main}\emph{(\ref{it:nondeg_nondisj})} and \ref{V:Main}\emph{(\ref{it:nondeg})} and give examples of continua for which the equality in Theorem~\ref{V:Main}\emph{(\ref{it:subseteq})} or in Theorem~\ref{V:Main}\emph{(\ref{it:supseteq})} holds.

\section{Preliminaries}\label{Sec:Prelim}
We denote the set of all (positive) natural numbers by $\mathbb{N}$. The set $\mathbb{N}\cup\{0\}$ is denoted by $\mathbb{N}_0$. The set of all real numbers is denoted by $\mathbb{R}$.

The \emph{union} of sets $A,B$ is denoted by $A\cup B$; if $A,B$ are disjoint, then their union is sometimes denoted by $A\sqcup B$. The \emph{intersection} of~sets $A,B$ is denoted by~$A\cap B$. If $A$ is a \emph{subset} of $B$, we write $A\subseteq B$. The~symbol $\overline{A}$ is used for the \emph{closure} of~a~set~$A$.

Let $X,Y,Z$ be topological spaces and $f\colon X\to Y$ be a continuous map. If $A\subseteq X$, then $f\vert_A\colon A\to Y$ is a \emph{restriction} of $f$ to $A$, i.e., $f\vert_A(a)=f(a)$ for every $a\in A$. A continuous map $F\colon Z\to Y$ is called a \emph{continuous extension} of $f$ if $X\subseteq Z$ and $F(x)=f(x)$ for every $x\in X$. We say that $f$ is a \emph{homeomorphism} if $f$ is a continuous bijection and its inverse map $f^{-1}\colon Y\to X$ is also continuous. By~$\id_X\colon X\to X$ we denote the \emph{identity map} on $X$.

A topological space $X$ is a \emph{continuum} if $X$ is a nonempty, compact, connected metrizable space. A point $x\in X$ is an \emph{end point} of $X$ if for every neighborhood $U$ of $x$ there is a neighborhood $V$ of $x$ such that $V\subseteq U$ and the boundary of $V$ (i.e., the set $\overline{V}\setminus V$) is a singleton. The~set of end points of $X$ is denoted by $E(X)$.

Let $X$ be a topological space and let $x,y\in X$. A \emph{path} from $x$ to~$y$ in~$X$ is a~continuous map $f\colon [a,b]\to X$ from the compact interval $[a,b]$ into $X$ with $f(a)=x$ and $f(b)=y$. If each pair $x,y\in X$ can be joined by a path in $X$, then $X$ is said to be \emph{path connected}. The~\emph{path components} of $X$ are the maximal (with~respect to inclusion) path connected subsets of $X$.

We say that a topological space $X$ is \emph{locally connected at $x\in X$} if for every neighborhood $U$ of $x$ there exists a connected neighborhood $V\subseteq U$ of $x$. If $X$ is locally connected at each point, then we say that $X$ is \emph{locally connected}. Similarly, we say that a topological space $X$ is \emph{locally path connected at $x\in X$} if for every neighborhood $U$ of $x$ there exists a path connected neighborhood $V\subseteq U$ of $x$. If $X$ is locally path connected at each point, then we say that $X$ is \emph{locally path connected}. 

A nonempty, compact, and totally disconnected metrizable space without isolated points is called a \emph{Cantor set}.

An \emph{arc} $A$ is a set homeomorphic to the closed unit interval $[0,1]$; clearly the set $E(A)$ of end points of $A$ consists of exactly two points. An arc $A$ in a topological space $X$ is \emph{free} if $A\setminus E(A)$ is open in $X$, i.e., $A\setminus E(A)$ is a free interval in $X$.

A space $Y$ homeomorphic to $\mathbb{R}$ or to $[0,\infty)$ is called a \emph{line} or a \emph{ray}, respectively. A topological characterization of minimal sets on such spaces is well-known. (Recall that $\mathcal{M}(Y)$ was introduced in Definition~\ref{D:M(Y)}.)

\begin{lemma} \label{L:minJ}
	Let $Y$ be a line or a ray. Then
	\begin{equation*}
	\mathcal{M}(Y) = \{M\subseteq Y\colon M \text{ is either a finite set or a Cantor set}\}.
	\end{equation*}
\end{lemma}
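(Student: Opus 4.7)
The plan is to prove the two inclusions separately, reducing the nontrivial forward direction to Theorem~\ref{V:known_characterization}\ref{it:interval}. Given $M\in\mathcal{M}(Y)$ witnessed by some continuous $f\colon Y\to Y$, compactness of $M\subseteq Y\subseteq\mathbb{R}$ gives $M\subseteq[a,b]\subseteq Y$, where $a=\min M$ and $b=\max M$. I would build an auxiliary continuous selfmap $\tilde f\colon[a,b]\to[a,b]$ by setting $\tilde f|_M:=f|_M$ and interpolating linearly on each bounded component $(c,d)$ of $[a,b]\setminus M$ between the values $f(c),f(d)\in M\subseteq[a,b]$. Since $\tilde f|_M=f|_M$ and $\tilde f(M)=M$, the dynamical system $([a,b],\tilde f)$ has $M$ as a minimal set; Theorem~\ref{V:known_characterization}\ref{it:interval} then yields that $M$ is finite or a Cantor set.

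For the reverse inclusion I handle the two cases separately. If $M=\{p_1<\dots<p_n\}$ is finite, I let $h$ be the cyclic shift on $M$ sending $p_i$ to $p_{i+1}$ (indices modulo $n$), and extend to a continuous $f\colon Y\to Y$ by piecewise linear interpolation on the gaps of $M$ inside $[p_1,p_n]$ and by constants on any unbounded components of $Y\setminus[p_1,p_n]$; then $M$ is a periodic orbit of $f$, hence minimal. If $M$ is a Cantor set, I invoke the standard fact that every Cantor set admits a minimal homeomorphism (for instance, a dyadic odometer transported along a homeomorphism $M\cong\{0,1\}^{\mathbb{N}}$) to obtain $h\colon M\to M$ with $(M,h)$ minimal, and extend $h$ to $Y$ by the same recipe.

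The main technical point, appearing in both inclusions, is verifying continuity of these piecewise-linear extensions at points $p\in M$ that are accumulated by points of $Y\setminus M$. For a sequence $x_n\to p$ with each $x_n$ in a gap $(c_n,d_n)$ of $M$, the endpoints $c_n,d_n\in M$ bracket $x_n$ and so also converge to $p$; continuity of the underlying map on $M$ gives $h(c_n),h(d_n)\to h(p)$, and the interpolated value, lying between $h(c_n)$ and $h(d_n)$, must then converge to $h(p)$. Once this routine verification is in place, both inclusions follow cleanly.
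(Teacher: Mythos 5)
The paper offers no proof of this lemma at all --- it is stated as ``well-known'' --- so any complete argument already goes beyond what the paper records. Your reduction of the forward inclusion to Theorem~\ref{V:known_characterization}\ref{it:interval} is the right idea and the construction is correct in substance, but one step is wrong as written. In the continuity check you claim that if $x_n\to p\in M$ with $x_n$ in a gap $(c_n,d_n)$ of $M$, then \emph{both} endpoints $c_n,d_n$ converge to $p$, and you then squeeze the interpolated value between $h(c_n)$ and $h(d_n)$. Only the endpoint lying between $x_n$ and $p$ need converge to $p$: for $M=\{0,1\}$ and $x_n\to 0$ inside the single gap $(0,1)$ one has $d_n=1$ for all $n$, so the squeeze fails as stated. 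The conclusion is still true, because the interpolated value is the convex combination $\tfrac{d_n-x_n}{d_n-c_n}h(c_n)+\tfrac{x_n-c_n}{d_n-c_n}h(d_n)$: passing to subsequences, either $d_n-c_n\to 0$ (and then both endpoints do converge to $p$), or $d_n-c_n$ is bounded below, in which case the weight of the far endpoint tends to $0$ while $h$ is bounded on the compact set $M$. You should also note that for a ray the component of $Y\setminus[\min M,\max M]$ containing the endpoint of the ray is bounded, so the ``constants on unbounded components'' clause should be ``constants on the components of $Y\setminus[\min M,\max M]$''.

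Separately, the paper's own toolkit yields a shorter route that avoids both the interpolation and the odometer: any compact $M\subseteq Y$ lies in an arc $[a,b]\subseteq Y$, which is a retract of $Y$ (by Lemma~\ref{L:Dug}, or by clamping). Lemma~\ref{L:Mretract} then gives $\mathcal{M}([a,b])=\{M\subseteq[a,b]\colon M\in\mathcal{M}(Y)\}$, and Theorem~\ref{V:known_characterization}\ref{it:interval} identifies $\mathcal{M}([a,b])$ as exactly the finite sets and Cantor sets; this settles both inclusions at once, the existence of a minimal map on a Cantor subset being already contained in Theorem~\ref{V:known_characterization}\ref{it:interval}.
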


\subsection{Retracts}

For a metrizable space $X$ and its subspace $Y\subseteq X$, a~continuous map $r\colon X\to Y$ is called a \emph{retraction} and $Y$ a \emph{retract} of~$X$ if the restriction of $r$ to $Y$ is the identity map on $Y$, i.e., $r\vert_Y=\id_Y$. We say that a compact metrizable space $A$ is an \emph{absolute retract} (in~the~class of compact metrizable spaces) if it satisfies the~following condition: whenever it is embedded as a closed subspace $A^*$ of a~compact metrizable space $S$, $A^*$ is a retract of $S$. It is a well-known fact that an arc is an~absolute retract. The next lemma provides a sufficient condition for~a~subspace of a~compact metrizable space being its retract.

\begin{lemma} \label{L:Dug}
	If $X$ is a compact metrizable space, $Y\subseteq X$ is a locally connected continuum and $\dim(X\setminus Y)\leq1$, then $Y$ is a retract of $X$.
\end{lemma}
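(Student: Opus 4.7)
The plan is to obtain the retraction by extending the identity map $\id_Y\colon Y\to Y$ to a continuous map $r\colon X\to Y$; any such extension is, by construction, a retraction of $X$ onto $Y$. Note that $Y$ is closed in $X$ since a continuum is compact and $X$ is Hausdorff, so the extension problem is well-posed.

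The key tool I would invoke is a classical extension theorem of Kuratowski--Borsuk (or Dugundji) type: if $A$ is a closed subset of a metrizable space $Z$, $\dim(Z\setminus A)\leq n+1$, and $W$ is a metrizable space which is both $C^n$ (i.e., $n$-connected) and $LC^n$ (locally $n$-connected), then every continuous map $A\to W$ extends continuously to $Z$. I would apply this with $n=0$, $Z=X$, $A=W=Y$, and the map $\id_Y$: the dimension hypothesis $\dim(X\setminus Y)\leq 1$ matches exactly the bound $n+1=1$, so it remains only to check that a locally connected metric continuum is $C^0$ (path connected) and $LC^0$ (locally path connected). Both are classical in continuum theory: path connectedness follows from the Hahn--Mazurkiewicz theorem (or directly from the fact that connected, locally connected metric spaces are arc-connected), and local path connectedness of locally connected metric continua is a standard result. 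Feeding $\id_Y$ through the extension theorem then produces the desired continuous $r\colon X\to Y$ with $r\vert_Y=\id_Y$.

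The main obstacle is primarily bookkeeping: pinpointing the correct version of the extension theorem in the literature and checking that the locally connected continuum $Y$ verifies its hypotheses. No genuine geometric construction is required, since once the theorem is applied with $n=0$ the retraction falls out immediately as an extension of $\id_Y$. If a more self-contained argument is preferred, one could instead triangulate $X\setminus Y$ via the assumption $\dim(X\setminus Y)\leq 1$, extend $\id_Y$ first over a countable ``skeleton'' of points using path connectedness of $Y$, and then over the remaining $1$-dimensional pieces using local path connectedness, yielding $r$ by a direct inductive construction; this serves as a fallback should the cited theorem not be applied verbatim.
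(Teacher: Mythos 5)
Your proposal is correct and follows essentially the same route as the paper: the paper likewise observes that $Y$ is closed and (locally) path connected (citing Nadler), invokes the Kuratowski/Dugundji extension theorem for maps into $C^0\cap LC^0$ spaces over a set with $\dim(X\setminus Y)\leq 1$, and takes the extension of $\id_Y$ as the retraction.
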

\begin{proof}
	The subcontinuum $Y\subseteq X$ is (locally) path connected (see~\cite[Theorems 8.23 and~8.25]{Nad92}) and closed. So by~\cite[Theorem 1' on~p.~347]{Kur68}, see also~\cite[Theorem 9.1]{Dug58}, any continuous $f\colon Y\to Y$ can be continuously extended to a map $F\colon X\to Y$. For $f=\id_Y$, any such extension is a retraction of $X$ onto $Y$.
\end{proof}

The systems of all minimal sets on a metrizable space and on its retracts are closely linked.

\begin{lemma}\label{L:Mretract}
	Let $X$ be a metrizable space and $Y\subseteq X$ be a retract of~$X$. Then
	\begin{equation*}
		 \mathcal{M}(Y) = \left\{M\subseteq Y\colon M\in \mathcal{M}(X)\right\}.
	\end{equation*}
\end{lemma}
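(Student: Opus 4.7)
The plan is to prove the two inclusions separately, using the retraction $r\colon X\to Y$ to convert maps on one space into maps on the other in such a way that the minimal set $M\subseteq Y$ and the dynamics restricted to $M$ stay untouched.

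For the inclusion $\mathcal{M}(Y)\supseteq\{M\subseteq Y\colon M\in\mathcal{M}(X)\}$, I would take $M\subseteq Y$ with $M\in\mathcal{M}(X)$, so there is a continuous $f\colon X\to X$ with $f(M)=M$ and $f\vert_M$ minimal. Define $g:=r\circ f\vert_Y\colon Y\to Y$. Since $f(M)=M\subseteq Y$ and $r\vert_Y=\id_Y$, I would observe $g(m)=r(f(m))=f(m)$ for every $m\in M$, so $g\vert_M=f\vert_M$ is a minimal map with $g(M)=M$; as $M$ is compact in $X$ and contained in $Y$, it is compact in $Y$, hence $M\in\mathcal{M}(Y)$.

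For the opposite inclusion, I would start with $M\in\mathcal{M}(Y)$, producing a continuous $g\colon Y\to Y$ with $g(M)=M$ and $g\vert_M$ minimal. Define $f:=g\circ r\colon X\to Y\subseteq X$. Again, for $m\in M\subseteq Y$, one has $f(m)=g(r(m))=g(m)$, so $f\vert_M=g\vert_M$ is minimal and $f(M)=M$. Since $M$ is compact in $Y$ and $Y$ is a subspace of $X$, $M$ is compact in $X$, so $M\in\mathcal{M}(X)$; being already contained in $Y$, it belongs to the right-hand side.

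There is no real obstacle here: the only point worth checking is that the retraction preserves the dynamics on $M$ precisely because $r$ fixes $Y$ pointwise and the relevant image $f(M)$ or $g(M)$ already lies in $Y$. Compactness transfers between $Y$ and $X$ trivially because $Y$ carries the subspace topology.
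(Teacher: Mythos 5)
Your proposal is correct and follows essentially the same route as the paper: both directions use exactly the compositions $g\circ r$ (to extend a map on $Y$ over $X$) and $r\circ f\vert_Y$ (to push a map on $X$ down to $Y$), together with the observation that $r\vert_Y=\id_Y$ forces the restriction to $M$ to be unchanged. No gaps; the argument matches the paper's proof of Lemma~\ref{L:Mretract}.
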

\begin{proof}
	Fix a retraction $r\colon X\to Y$.
	
	Let $M\in \mathcal{M}(Y)$ and fix a continuous map $g_Y\colon Y\to Y$ with $\left(M,g_Y\vert_M\right)$ being a~minimal dynamical system. Then $f=g_Y\circ r\colon X\to Y$ is a continuous extension of $g_Y$ over the entire $X$ such that $f\vert_M = g_Y\vert_M$ since $M\subseteq Y$. So $M$ is a minimal set of $(X,f)$, i.e., $M\in\mathcal{M}(X)$.
	
	Let $M\subseteq Y$ and $M\in \mathcal{M}(X)$, i.e., there is a continuous map $g\colon X\to X$ such that $\left(M,g\vert_M\right)$ is a minimal dynamical system; fix such $g$. Then $f_Y = r\circ g\vert_Y\colon Y\to Y$ is a continuous map. Moreover, $f_Y\vert_M = g\vert_M$, therefore $M$ is a minimal set of $\left(Y,f_Y\right)$, i.e., $M\in \mathcal{M}(Y)$. 
\end{proof}

\subsection{Path components} Path components of any $X\in\mathcal{C}$ play a significant role in~the~characterization of the topological structure of sets from $\mathcal{M}(X)$. Here we state some technical lemmas concerning them. Lemmas~\ref{L:XdoRemainderu} and \ref{L:MandJ} hold for continua from $\mathcal{C}$, but Remark~\ref{P:P-CdoP-C} and Lemma~\ref{L:permutation} hold in general.

\begin{rem}\label{P:P-CdoP-C}
	The image of a path connected set under a continuous map is path connected, since a composition of continuous maps is also continuous. Particularly, if $(X,f)$ is a dynamical system, then $f$ maps each path connected set of $X$ into~a~path component of $X$.
\end{rem}

\begin{lemma} \label{L:XdoRemainderu}
	Let $X=L\cup J\cup R$ belong to $\mathcal{C}$ and $f\colon X\to X$ be a~continuous map. If $D$ is a path component of $X$ disjoint with $J$ and~$f(J)\cap D\neq\emptyset$, then $f(X)\subseteq D$.
\end{lemma}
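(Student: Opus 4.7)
The plan is to exploit two facts about $J$: it is path connected, and it is dense in $X$. The first fact forces $f(J)$ into a single path component, and the density of $J$ together with continuity of $f$ will then push the whole image $f(X)$ into the closure of that component.

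First I would argue that $f(J)\subseteq D$. Since $J$ is homeomorphic to $(0,1)$, it is path connected, so by Remark~\ref{P:P-CdoP-C} the image $f(J)$ is path connected and therefore contained in a single path component of $X$. The hypothesis $f(J)\cap D\neq\emptyset$ then forces this path component to be $D$.

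Next I would observe that $D$ is closed in $X$. By Remark~\ref{P:P-C}, any path component of $X$ that is disjoint with $J$ must be one of $L$, $R$, or $L\cup R$ (the latter only when $L\cap R\neq\emptyset$), since the remaining path component always contains $J$. In all these cases $D$ is a finite union of the continua $L$ and $R$, hence closed.

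Finally, using that $J$ is dense in $X$, so $X=\overline{J}$, continuity of $f$ yields
\begin{equation*}
f(X)=f(\overline{J})\subseteq\overline{f(J)}\subseteq\overline{D}=D,
\end{equation*}
which is the desired conclusion. There is no real obstacle here; the only thing one has to be slightly careful about is checking that a path component of $X$ disjoint with $J$ is automatically closed, which follows immediately from the concrete list of possibilities provided by Remark~\ref{P:P-C}.
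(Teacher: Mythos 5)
Your proposal is correct and follows essentially the same route as the paper: identify $f(J)\subseteq D$ via Remark~\ref{P:P-CdoP-C}, note $D$ is closed by the case list in Remark~\ref{P:P-C}, and conclude via density of $J$ and continuity of $f$. The only cosmetic difference is that the paper records the equality $f(\overline{J})=\overline{f(J)}$ using compactness of $X$, whereas you only use the inclusion $f(\overline{J})\subseteq\overline{f(J)}$, which suffices.
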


\begin{proof}
	Let $D$ be a path component of $X$ disjoint with $J$ such that $f(J)\cap D\neq\emptyset$. The free interval $J$ is dense in $X$, so $X=\overline{J}$ and $f(X)=f\left(\overline{J}\right)$. Moreover, $f$ is continuous and $X$ is a compact Hausdorff space, therefore $f\left(\overline{J}\right) = \overline{f(J)}$. Since $f(J)\cap D\neq\emptyset$, Remark~\ref{P:P-CdoP-C} implies $f(J)\subseteq D$.
	By Remark \ref{P:P-C}, $D$ is exactly one of $L$, $R$ or $L\cup R$; in each case, $D$ is closed. Thus $f(X)=\overline{f(J)}\subseteq \overline{D}=D$.
\end{proof}

\begin{lemma}\label{L:permutation}
	Let $(X,f)$ be a dynamical system and $M \subseteq X$ be a minimal set of it. Assume that there are path components $P_1, P_2,\dots, P_k$ ($k\in\mathbb{N}$) of $X$ such that
	\begin{equation*}
		M\subseteq \bigcup\limits_{i=1}^k P_i \quad\text{ and } \quad M\cap P_i\neq\emptyset \text{ for every } i\in\{1,2,\dots, k\}.
	\end{equation*}
	Then there is a unique permutation $\varphi$ of $\{1,2,\dots, k\}$ (i.e., a bijection of~$\{1,2,\dots, k\}$ onto itself) such that
	\begin{equation}\label{eq:L}
		f\left(P_i\right)\subseteq P_{\varphi(i)} \text{ for every } i.
	\end{equation}
	Moreover, if $M$ intersects the interior (with respect to the subspace topology on $\bigcup\limits_{i=1}^k P_i$) of some $P_i$, then $\varphi$ is a cycle.
\end{lemma}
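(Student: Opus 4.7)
The plan is to construct $\varphi$ by tracking where $f$ sends each $P_i$, then to use minimality of $(M,f\vert_M)$ to promote it to a bijection, and finally to deduce the cycle property from the density of forward orbits in a minimal system.

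For the construction, I would pick a witness $x_i \in M \cap P_i$ for each $i \in \{1,\dots,k\}$. Since $f(x_i) \in M \subseteq \bigcup_j P_j$ and path components are pairwise disjoint, there is a unique index $\varphi(i)$ with $f(x_i) \in P_{\varphi(i)}$. By Remark~\ref{P:P-CdoP-C} the set $f(P_i)$ is path connected; being contained in a single path component and meeting $P_{\varphi(i)}$, it must satisfy \eqref{eq:L}, and uniqueness of $\varphi$ follows at once. To upgrade $\varphi$ to a permutation of a finite set, by finiteness it suffices to show surjectivity: minimality forces $f(M)=M$ (since $f(M)$ is a nonempty closed $f$-invariant subset of $M$), so for every $j$ and every $y\in M\cap P_j$ one can write $y=f(z)$ with $z\in M\cap P_i$ for some $i$, which forces $\varphi(i)=j$.

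For the cycle claim, set $Y=\bigcup_{i=1}^k P_i$ and fix $x_0\in M\cap\inter_Y(P_{i_0})$. By definition of the subspace topology on $Y$, there exists an open set $U\subseteq X$ with $x_0\in U$ and $U\cap Y\subseteq P_{i_0}$; since $M\subseteq Y$, the set $U\cap M$ is a nonempty open subset of $M$ contained in $P_{i_0}$. In a minimal system every forward orbit is dense in $M$ (the closure of $\{f^n(y):n\geq 0\}$ is a nonempty closed $f$-invariant subset of $M$, hence equals $M$), so for each $j\in\{1,\dots,k\}$ and each $y_j\in M\cap P_j$ there exists $n_j\geq 0$ with $f^{n_j}(y_j)\in U\cap M\subseteq P_{i_0}$. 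Iterating \eqref{eq:L} yields $f^{n_j}(y_j)\in P_{\varphi^{n_j}(j)}$, whence $\varphi^{n_j}(j)=i_0$. Thus every index in $\{1,\dots,k\}$ lies on the forward $\varphi$-orbit of $i_0$, forcing $\varphi$ to be a single $k$-cycle.

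The main obstacle I foresee is correctly translating the interior hypothesis: one must recognize that the definition of interior relative to the subspace topology on $Y$ furnishes an open neighborhood in the whole of $X$ whose trace on $Y$ (equivalently on $M$) lies inside $P_{i_0}$, which is precisely what lets density of forward orbits under $f\vert_M$ be applied to conclude the cycle property.
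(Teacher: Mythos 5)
Your proposal is correct and follows essentially the same route as the paper: define $\varphi$ via path-connectedness of the images $f(P_i)$, use $f(M)=M$ together with $M\cap P_j\neq\emptyset$ to get surjectivity, and use density of forward orbits in the minimal system to land in $M\cap\inter(P_{i_0})$ and conclude that $\varphi$ is a single cycle. One cosmetic remark: what you actually derive is $\varphi^{n_j}(j)=i_0$, i.e.\ that $i_0$ lies on the forward $\varphi$-orbit of every $j$ (not the reverse, as you phrase it), but since $\varphi$ is a permutation of a finite set the two statements are equivalent and the cycle conclusion stands.
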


\begin{proof}
	Put $N_k=\{1,2,\dots, k\}$. By~Remark~\ref{P:P-CdoP-C}, \eqref{eq:L}~uniquely defines a~map $\varphi\colon N_k\to N_k$. If $\varphi$ is not a bijection, then it is not surjective, and so there is $j\in N_k\setminus \varphi\left(N_k\right)$. But then $M=f(M)\subseteq \bigcup\limits_{i=1}^k f\left(P_i\right)\subseteq \bigcup\limits_{i=1}^k P_{\varphi(i)}$ does not intersect $P_j$ which contradicts an assumption of this lemma. Thus $\varphi$ is a bijection.
	
	Assume now that $M\cap \inter\left(P_i\right)\neq\emptyset$ for some $i\in N_k$, where $\inter(P_i)$ denotes the~interior of $P_i$. Fix any $j\in N_k\setminus\{i\}$ and any $x\in M\cap P_j$. Since the orbit of~$x$ is dense in $M$, there is $h\in\mathbb{N}$ such that $f^h(x)\in M\cap \inter\left(P_i\right)$. Thus, by~\eqref{eq:L}, $\varphi^h(j)=i$. Recall that every permutation can be decomposed into one or more disjoint cycles. Hence $j$ belongs to the same cycle as $i$. Since $j$ was arbitrary, $\varphi$ is a cycle.
\end{proof}

\begin{lemma} \label{L:MandJ}
	Let $X=L\cup J\cup R$ belong to $\mathcal{C}$, $f\colon X\to X$ be a continuous map and $M$ be a minimal set of $f$ with $M\cap J\neq\emptyset$. If $P$ denotes the path component of~$X$ containing $J$, then $f(P)\subseteq P$. Moreover, $M\subseteq P$.
\end{lemma}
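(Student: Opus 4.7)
The plan is to first show $f(J) \subseteq P$, promote this to $f(P) \subseteq P$ by a path-connectedness argument, and then invoke Lemma~\ref{L:permutation} to obtain $M \subseteq P$.

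First, $f(J)$ is path connected as the continuous image of $J$, and hence lies in a single path component $D$ of $X$. If $D \neq P$, then $D$ is disjoint from $J$ (since $J \subseteq P$ and distinct path components are disjoint), so Lemma~\ref{L:XdoRemainderu} gives $f(X) \subseteq D$. Since $M$ is minimal, $f(M)$ is a nonempty closed $f$-invariant subset of $M$, so $f(M) = M$; therefore $M \subseteq D$, contradicting $M \cap J \neq \emptyset$. Thus $D = P$, i.e., $f(J) \subseteq P$. Then $f(P)$ is the path-connected image of $P$, meets $P$ through the nonempty set $f(J)$, and so $f(P) \subseteq P$.

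To deduce $M \subseteq P$, I apply Lemma~\ref{L:permutation} to the path components of $X$ that meet $M$, of which there are at most three by Remark~\ref{P:P-C}. Labelling them $P_1 = P, P_2, \dots, P_k$, the lemma yields a permutation $\varphi$ of $\{1, \dots, k\}$ with $f(P_i) \subseteq P_{\varphi(i)}$. Since $J$ is open in $X$ and contained in $P_1$, it lies in $\inter(P_1)$ with respect to the subspace topology on $\bigcup_i P_i$; the hypothesis $M \cap J \neq \emptyset$ then forces $\varphi$ to be a cycle. But $f(P) \subseteq P$ means $\varphi(1) = 1$, so the cycle has length one, $k = 1$, and $M \subseteq P$.

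The main obstacle is exactly this last step: the inclusion $f(P) \subseteq P$ by itself only gives $M \subseteq \overline{P}$, which can equal all of $X$ because $J$ is dense in $X$ (for instance when $P = J$ in case~(c) of Remark~\ref{P:P-C}). Lemma~\ref{L:permutation} is what bridges the gap, combining the openness of $J$ in $X$ with the finiteness of the family of path components to rule out $M$ visiting any other component.
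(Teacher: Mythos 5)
Your proof is correct and follows essentially the same route as the paper: both arguments rest on Lemma~\ref{L:permutation} to obtain a cyclic permutation of the path components meeting $M$ and then force $\varphi(1)=1$ using the density of $J$ together with the closedness of the other path components. The only cosmetic difference is that you establish $f(J)\subseteq P$ by invoking Lemma~\ref{L:XdoRemainderu}, whereas the paper inlines the same closure computation $M\subseteq\overline{f(J)}\subseteq\overline{P_j}$ directly inside the permutation argument.
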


\begin{proof}
	Denote the path components of $X$ intersecting $M$ by $P_i$ ($i=1,\dots, k$) where $1\leq k\leq 3$ and $P_1=P$. Let $\varphi$ be the permutation of~$\{1,\dots, k\}$ from Lemma~\ref{L:permutation}; since $\varphi$ intersects $J\subseteq \inter\left(P_1\right)$, $\varphi$ is a~cycle. Put $j=\varphi(1)$. If $j\neq 1$, then
	$$M=f(M)\subseteq f(X)=\overline{f(J)} = \overline{f\left(P_1\right)} \subseteq \overline{P_j}$$ 
	and $\overline{P_j}$ is disjoint with $J$ which contradicts the assumption that $M\cap J\neq\emptyset$. Thus $j=1$ and $f(P)\subseteq P$ by~\eqref{eq:L}. Further, since $\varphi$ is a cycle, $k=1$; that is, $M\subseteq P$.
\end{proof}

\section{Remainders $L, R$ are singletons}\label{Sec:Singletons}
If $X=L\cup J\cup R$ belongs to $\mathcal{C}$ and $L=R$ are singletons, then X is homeomorphic to a circle. Since minimal sets on a circle are finite sets or~Cantor sets or the entire circle (see~Theorem~\ref{V:known_characterization}\ref{it:circle}),
\begin{equation*}
	\mathcal{M}(X) = \{M\subseteq X \colon M \text{ is a finite set or a Cantor set or the entire } X\}.
\end{equation*}

If $X=L\cup J\cup R$ belongs to $\mathcal{C}$ and $L\neq R$ are distinct singletons, then $X$ is homeomorphic to a compact interval. So, by~Theorem~\ref{V:known_characterization}\ref{it:interval}, 
\begin{equation*}
\mathcal{M}(X) = \{M\subseteq X\colon M \text{ is a finite set or a Cantor set}\}.
\end{equation*}

\section{Remainder $L$ is a nondegenerate continuum and the remainder $R$ is a singleton}\label{Sec:singleNondeg}
\subsection{Remainder $R$ is a subset of the remainder $L$}

To characterize minimal sets in this case, we will use the~following lemma (\cite[Theorem~2]{SU12}, see also \cite[Theorem~D]{DHMSS19}).

\begin{lemma}\label{L:SU12}
	Let $X$ be a compact metrizable space with a free interval $J$ and $M$ be a minimal set on $X$. Then there exists a free arc $A$ in~$X$ such that $M\cap J\subseteq A\subseteq\overline{J}$. Moreover, if $M$ intersects $J$, then $M$ is contained in a closed locally connected subset of $X$.
\end{lemma}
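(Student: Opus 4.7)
The plan is to parameterize the free interval via a homeomorphism $\phi\colon(0,1)\to J$ and study the closed set $S:=\phi^{-1}(M\cap J)\subseteq(0,1)$. The desired arc will be obtained as $A:=\phi([a,b])$ for some $0<a<\inf S$ and $\sup S<b<1$; the crux of the matter is therefore to rule out that $S$ accumulates at the endpoints $0$ or $1$. The case $M\cap J=\emptyset$ is handled trivially by choosing any free arc inside $\overline{J}$ (the moreover statement is vacuous), so I would reduce at once to $M\cap J\neq\emptyset$ and focus on proving boundedness of $S$.

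To show $\inf S>0$ (and symmetrically $\sup S<1$) I would argue by contradiction: suppose there is a sequence $t_n\searrow 0$ with $\phi(t_n)\in M$. By compactness of $X$, a subsequence converges to some $p\in\overline{J}\setminus J$, and $p\in M$ because $M$ is closed. Minimality implies the forward orbit of $p$ is dense in $M$, so some iterate $f^N(p)$ lies in $J$ at a parameter bounded away from~$0$. Pick a small neighborhood $U$ of $p$ in $X$ such that $U\cap J$ equals the one-sided arc $\phi((0,\varepsilon))$ together with possibly further components of $X\setminus J$. By continuity, $f^N(U\cap J)$ is a connected subset of $X$ containing $f^N(p)$ and, for $U$ small enough, staying inside $J$, hence covering an open subarc around $f^N(p)$. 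Iterating this along the orbit of $p$ and using density in $M$ once more, one extracts either a point of $M\cap J$ of arbitrarily small parameter whose whole $\phi$-neighborhood is swallowed by~$M$ (forcing proper $f$-invariant closed subsets of~$M$, in conflict with minimality) or, alternatively, an uncountable family of disjoint arcs mapped inside $J$ that must be absorbed into $M$. Either alternative contradicts minimality. The detailed combinatorial bookkeeping for this step is the main technical obstacle, and I would follow the argument carried out in \cite{SU12} and \cite{DHMSS19}.

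Once boundedness of $S$ is established, pick any $a\in(0,\inf S)$ and $b\in(\sup S,1)$ and set $A:=\phi([a,b])$. Then $\phi((a,b))$ is open in $J$, hence open in $X$, so $A$ is a free arc in $X$ with $A\subseteq\overline{J}$ and $M\cap J\subseteq A$. For the moreover part, I would upgrade this to the stronger statement $M\subseteq A$: any $y\in M\setminus A$ lies in $X\setminus J$, and by density of the orbit of $y$ in $M$, some iterate $f^k(y)$ falls into the open subarc $\phi((a,b))$. The preimage of $\phi((a,b))$ near $y$ is then a neighborhood of $y$ mapped by $f^k$ into $J$, and running the boundedness argument of the previous paragraph around $y$ would either produce points of $M\cap J$ outside the parameter range $[\inf S,\sup S]$ or again violate minimality. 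Therefore $M\subseteq A$, and since an arc is a closed locally connected subset of $X$, $A$ witnesses the moreover claim.
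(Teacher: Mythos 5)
The paper itself contains no proof of this lemma: it is imported wholesale from \cite[Theorem~2]{SU12} and \cite[Theorem~D]{DHMSS19}, so the only question is whether your argument stands on its own. For the first assertion it does not. The entire difficulty is showing that $S=\phi^{-1}(M\cap J)$ stays away from $0$ and $1$, and the sketch you offer for this breaks down as written: for $p\in\overline{J}\setminus J$ (say a point of the limit segment of a topologist's sine curve) every neighbourhood $U$ meets $J$ in infinitely many disjoint subarcs, so $U\cap J$ is not connected, $f^N(U\cap J)$ need not contain $f^N(p)$ or any subarc around it, and neither of the two ``alternatives'' you then invoke is actually derived, let alone shown to contradict minimality. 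You acknowledge this by deferring the ``detailed combinatorial bookkeeping'' to \cite{SU12} and \cite{DHMSS19} --- that is, to the very result being proved --- so the core of part one is outsourced rather than established.

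The treatment of the ``moreover'' clause is worse: it argues for a statement that is false. You claim $M\subseteq A$ for the free arc $A$, but a free arc can meet $X\setminus J$ only in its two end points (its interior is a free interval, and $X$ is not locally connected at any other point of $X\setminus J$ it could contain), whereas a minimal set meeting $J$ may meet $X\setminus J$ in infinitely many points. Concretely, in the Warsaw circle $W=L\cup J\cup R$ take the arc $B=L'\cup A$, where $A$ is an arc with $R\subseteq A\subset R\cup J$ and $L'$ is a subarc of $L$ having the point $R$ as an end point. Then $B$ is a locally connected subcontinuum with $\dim(W\setminus B)\leq 1$, hence a retract of $W$ by Lemma~\ref{L:Dug}; by Theorem~\ref{V:known_characterization}\ref{it:interval} every Cantor set in $B$ is minimal on $B$, hence minimal on $W$ by Lemma~\ref{L:Mretract}. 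A Cantor set in $B$ chosen to meet $A\cap J$ and to meet $L'$ in infinitely many points is therefore a minimal set intersecting $J$ that is contained in no free arc. The actual content of the ``moreover'' part is only that $M$ lies in \emph{some} closed locally connected subset --- typically the free arc together with the locally connected pieces of $X\setminus J$ on which $\overline{M\cap J}$ accumulates --- and producing that set and verifying its local connectedness is a separate argument that your proposal does not contain.
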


Now we can prove the statement from Theorem~\ref{V:Main}\emph{(\ref{it:sing_nondeg_nondisj})} and its corollary for $L$ being a local dendrite.

\begin{thm}\label{V:Varsava}
	Let $X=L\cup J\cup R$ belong to $\mathcal{C}$. Assume that $R$ is a~singleton, $L$ is nondegenerate and $R\subset L$. Then
	\begin{equation*}
	\mathcal{M}(X) = \bigcup\left\{\mathcal{M}\left(L\cup A\right)\colon A\text{ is an arc, } R\subseteq A\subset R\cup J\right\}.
	\end{equation*}
\end{thm}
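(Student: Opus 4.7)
My plan is to prove both inclusions by reducing questions on $X$ to questions on the compact subspace $L\cup A$, which will turn out to be a retract of $X$. The main tools are Lemma~\ref{L:Dug} (local dendrite/locally connected subcontinuum is a retract provided the complement is at most $1$-dimensional) and Lemma~\ref{L:Mretract} (minimal sets on a retract $Y\subseteq X$ are exactly the subsets of $Y$ minimal on $X$). Before invoking these, I would verify once and for all that, for any arc $A$ with $R\subseteq A\subset R\cup J$, the space $L\cup A$ is a locally connected continuum (compactness and connectedness are immediate since $A\cap L=\{R\}$; local connectedness at $R$ follows because a connected neighborhood of $R$ in $L$ glued to a half-open tail of $A$ at $R$ is again connected, and away from $R$ the local structure comes from $L$ or from $A\subseteq \{R\}\cup J$ directly) and that $\dim(X\setminus(L\cup A))\le 1$ since $X\setminus(L\cup A)\subseteq J$.

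For the inclusion ``$\supseteq$'', given $M\in\mathcal{M}(L\cup A)$, the verification above together with Lemma~\ref{L:Dug} makes $L\cup A$ a retract of $X$, and then Lemma~\ref{L:Mretract} immediately yields $M\in\mathcal{M}(X)$.

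For the inclusion ``$\subseteq$'', let $M\in\mathcal{M}(X)$. If $M\cap J=\emptyset$ then, using Remark~\ref{P:imagine} to identify $J$ with $(0,1)$ and $R$ with the ``right'' endpoint, we have $M\subseteq X\setminus J=L$ (since $R\subset L$); any arc $A=\{R\}\cup[a,1)$ with $a\in(0,1)$ then satisfies $M\subseteq L\subseteq L\cup A$, so the retract argument above gives $M\in\mathcal{M}(L\cup A)$. If $M\cap J\neq\emptyset$, I would apply Lemma~\ref{L:SU12} to obtain a free arc $A^*\subseteq X$ with $M\cap J\subseteq A^*$. The interior $I^*=A^*\setminus E(A^*)$ is open in $X$; since $L$ is nowhere dense in $X$, we must have $I^*\subseteq J$, and $I^*$ is a connected open subset of $J\cong(0,1)$, hence an open interval $(a,b)\subseteq(0,1)$. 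Taking the closure in $X$, the arc endpoints are forced to be single points; the crucial point is that $a=0$ is impossible, because then the closure of $(0,b)\subseteq J_L$ in $X$ would include every accumulation point of $J_L$, which by the assumption that $L$ is the remainder of the compactification $L\cup J_L$ is all of the nondegenerate continuum $L$, contradicting $A^*\cap L\subseteq E(A^*)$ (at most two points). Hence $a>0$, $A^*\cap J\subseteq[a,1)$, and taking $A=\{R\}\cup[a,1)$ gives an arc with $R\subseteq A\subsetneq R\cup J$ and $M\subseteq L\cup A$; the retract argument concludes.

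The main obstacle I anticipate is the rigorous justification that $a>0$, i.e., that a free arc in $X$ cannot ``run out to infinity'' along $J_L$ and terminate at a point of $L$. The informal reason is that any such endpoint would demand a continuous path in $J_L$ with a single limit in $L$, whereas the compactification hypothesis forces $J_L$ to accumulate on all of the nondegenerate $L$ so no parametrized path to infinity in $J_L$ converges. Making this precise—e.g., by observing that a small $X$-neighborhood of any $p\in L$ meets $J_L$ in infinitely many disjoint arcs, which no continuous path in $J_L$ can eventually be contained in while escaping to infinity—is the most delicate part of the argument; once it is in hand, the rest is a straightforward application of the retract machinery.
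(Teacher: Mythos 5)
Your proof is correct and follows essentially the same route as the paper: Lemma~\ref{L:SU12} to locate $M$ inside $L\cup A$ for a suitable arc $A$ with $R\subseteq A\subset R\cup J$, and then Lemmas~\ref{L:Dug} and~\ref{L:Mretract} to pass between $\mathcal{M}(X)$ and $\mathcal{M}(L\cup A)$ in both directions. The only difference is that you spell out (correctly) why the free arc produced by Lemma~\ref{L:SU12} cannot accumulate on the nondegenerate remainder $L$ and hence yields the required arc $A$ --- a step the paper's proof leaves implicit.
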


\begin{proof}
Fix $M\in\mathcal{M}(X)$. By Lemma~\ref{L:SU12}, there exists an arc $A\subset R\cup J$ containing $R$ such that $M\subseteq L\cup A$. Since $L\cup A$ is a locally connected continuum and $\dim \bigl(X\setminus \left(L\cup A\right) \bigr) = 1$, by Lemmas~\ref{L:Dug} and~\ref{L:Mretract} we have that $M\in\mathcal{M}(L\cup A)$.

On the other hand, assume that $M\in\mathcal{M}\left(L\cup A\right)$ for some arc $A\subset R\cup J$ containing $R$. Note that $L\cup A$ is a locally connected subcontinuum of $X$ and $\dim \bigl(X\setminus \left(L\cup A\right) \bigr) = 1$. Thus, by Lemmas~\ref{L:Dug} and~\ref{L:Mretract}, $M\in\mathcal{M}(X)$.
\end{proof}

\begin{cor}\label{Cor:Varsava}
	Let the assumptions of~Theorem~\ref{V:Varsava} hold and suppose that $L$ is a~local dendrite. Then $M\subseteq X$ is a minimal set on $X$ if and only if exactly one of~the~following conditions holds:
	\renewcommand{\labelenumi}{(\arabic{enumi})}
	\begin{enumerate}
		\item $M$ is a finite set;
		\item $M$ is a union of finitely many pairwise disjoint circles;
		\item $M$ is a cantoroid and $M\subseteq L\cup A$ for an arc $A$ such that $R\subseteq A\subset R\cup J$.
	\end{enumerate}
\end{cor}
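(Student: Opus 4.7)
The plan is to apply Theorem~\ref{V:Varsava} to rewrite $\mathcal{M}(X)$ as the union $\bigcup\mathcal{M}(L\cup A)$ over admissible arcs $A$, and then to invoke the local dendrite classification Theorem~\ref{V:known_characterization}\ref{it:locDend} on each $L\cup A$.

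First I would verify that $L\cup A$ is itself a local dendrite. Since $L\cap J=\emptyset$ and $A\subset R\cup J$, we have $L\cap A=\{R\}$, so $A$ is glued to the local dendrite $L$ at the single endpoint $R$. The resulting space is a locally connected continuum, and attaching an arc at a single point introduces no new simple closed curves, so $L\cup A$ has the same (finite) number of circles as $L$ and is therefore a local dendrite.

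Combining this observation with Theorems~\ref{V:Varsava} and~\ref{V:known_characterization}\ref{it:locDend}, I would conclude that $M\in\mathcal{M}(X)$ if and only if $M$ is finite, a cantoroid, or a union of finitely many pairwise disjoint circles, and moreover $M\subseteq L\cup A$ for some admissible $A$. The three types are mutually exclusive (for instance, a cantoroid has no isolated points while a finite set consists entirely of them), so the forward direction of the corollary is immediate. For the converse, case~(3) is exactly the hypothesis; in case~(1), a finite $M$ can be enclosed in $L\cup A$ by choosing $A=\{R\}\cup[a,1)$ (using the identification from Remark~\ref{P:imagine}) with $a$ smaller than the $J$-coordinate of every point of $M\cap J$, and then $M\in\mathcal{M}(L\cup A)\subseteq\mathcal{M}(X)$.

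The main obstacle will be case~(2), where the hypothesis provides no containment. I plan to prove separately that every topological circle $C\subseteq X$ must lie in $L$. If $C\cap J\neq\emptyset$, then $C\cap J$ is open in $C$ and thus a disjoint union of open subintervals of $J$; a careful analysis of the boundaries in $X$, using that $L$ is nondegenerate and nowhere dense so that the ray $J_L$ densely approaches $L$, would force the intersection $C\cap J$ to equal all of $J$, whence $C\supseteq\overline{J}=X$ and $X$ itself would be a topological circle -- impossible since a nondegenerate connected proper subset of $S^1$ cannot be nowhere dense. Hence $M\subseteq L$; and since $L$ is a locally connected continuum with $\dim(X\setminus L)=\dim J\leq 1$, Lemma~\ref{L:Dug} makes $L$ a retract of $X$, so Theorem~\ref{V:known_characterization}\ref{it:locDend} gives $M\in\mathcal{M}(L)$, and Lemma~\ref{L:Mretract} implies $M\in\mathcal{M}(X)$.
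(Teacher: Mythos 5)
Your proposal is correct and follows the same route as the paper: observe that $L\cup A$ is a local dendrite and then combine Theorem~\ref{V:Varsava} with Theorem~\ref{V:known_characterization}\ref{it:locDend}. The paper's proof is a two-line deduction that leaves implicit exactly the points you elaborate -- that a finite $M$ fits inside $L\cup A$ for a suitably chosen arc $A$, and that every circle in $X$ necessarily lies in $L$ -- so your additional work (the circle argument is only sketched, but the idea is sound and completable) supplies detail the paper omits rather than a genuinely different method.
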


\begin{proof}
	Trivially, for any arc $A\subseteq X$ such that $R\subseteq A\subset R\cup J$, the set $L\cup A$ is connected. Thus it is easy to see that $L\cup A$ is a local dendrite. Now, the statement follows from~Theorems~\ref{V:Varsava} and~\ref{V:known_characterization}\ref{it:locDend}.
\end{proof}

An example of a space $X=L\cup J\cup R$ belonging to $\mathcal{C}$ with $L$ being a nondegenerate continuum and $R$ being a singleton such that $R\subset L$ is the~Warsaw circle $W$ (see Figure~\ref{fig:Priklady_z_C}(a)). In this case, $L$ is a local dendrite, and therefore we obtain a complete characterization of minimal sets on $W$ from Corollary~\ref{Cor:Varsava}. Since $W$ contains no circles and every cantoroid in $W$ is a Cantor set, we get
\begin{equation*}
M\in\mathcal{M}(W)\iff M \text{ is a finite or Cantor set in } L\cup A
\end{equation*}
for an arc $A$ such that $R\subseteq A\subset R\cup J$.

\subsection{Remainders $L, R$ are disjoint}
In this subsection we prove Theorem~\ref{V:Main}\emph{(\ref{it:sing_nondeg_disj})} and its corollary for $L$ being a local dendrite.

\begin{thm}\label{V:TSC}
	Let $X=L\cup J\cup R$ belong to $\mathcal{C}$. Assume that $R$ is a~singleton, $L$ is nondegenerate and $L\cap R=\emptyset$. Then
	\begin{equation*}
	\mathcal{M}(X) = \mathcal{M}\left(L\right)\sqcup \mathcal{M}\left(J\cup R\right)
	\end{equation*}
	and $\mathcal{M}\left(J\cup R\right)$ is the system of all finite and Cantor subsets of $J\cup R$.
\end{thm}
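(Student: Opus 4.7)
The plan is to exploit the path component description in Remark~\ref{P:P-C}: under these hypotheses the path components of $X$ are exactly $L$ and $J\cup R$, and $J\cup R$ is homeomorphic to the ray $[0,\infty)$ (since $R\cup J_R$ is the one-point compactification of the ray $J_R$, with $J_L$ glued on). The identification of $\mathcal{M}(J\cup R)$ with the system of all finite and Cantor subsets is then immediate from Lemma~\ref{L:minJ}, and the disjointness of the union in the theorem is automatic because $L\cap(J\cup R)=\emptyset$.

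For the inclusion $\mathcal{M}(L)\sqcup\mathcal{M}(J\cup R)\subseteq\mathcal{M}(X)$ I would treat the two summands separately. Any $M\in\mathcal{M}(L)$ passes into $\mathcal{M}(X)$ exactly as in the proof of Theorem~\ref{V:Varsava}: $L$ is a locally connected continuum with $\dim(X\setminus L)\leq 1$, so Lemma~\ref{L:Dug} makes $L$ a retract of $X$ and Lemma~\ref{L:Mretract} transfers $M$. For $M\in\mathcal{M}(J\cup R)$ realised by $g\colon J\cup R\to J\cup R$ I would choose an arc $A\subseteq J\cup R$ with $R\subseteq A$ and $M\subseteq A$, and let $\rho\colon J\cup R\to A$ be the retraction that is the identity on $A$ and collapses the complementary half-open ray to the free endpoint $a$ of $A$. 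Then $\tilde g=\rho\circ g|_A\colon A\to A$ is continuous and, since $g(M)=M\subseteq A$, satisfies $\tilde g|_M=g|_M$. Finally I would extend $\tilde g$ to $f\colon X\to X$ by setting $f\equiv\tilde g(a)$ on $X\setminus A$. Continuity at $a$ matches trivially from both sides; and because $A$ is compact in $J\cup R$ and disjoint from $L$, a whole open neighbourhood of $L$ in $X$ is mapped by the single value $\tilde g(a)$, which settles continuity at points of $L$ as well.

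For the reverse inclusion, let $M\in\mathcal{M}(X)$ be realised by some $f$. If $M$ meets $J$, then Lemma~\ref{L:MandJ} simultaneously forces $M\subseteq J\cup R$ and $f(J\cup R)\subseteq J\cup R$, placing $M$ in $\mathcal{M}(J\cup R)$. Otherwise $M\subseteq L\cup\{r\}$, and the two easy sub-cases $M\subseteq L$ (retract argument, $M\in\mathcal{M}(L)$) and $M=\{r\}$ (a one-point set in $J\cup R$) are immediate.

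The hard part will be ruling out the mixed case where $r\in M$ and $M\cap L\neq\emptyset$ while $M\cap J=\emptyset$. Here I would apply Lemma~\ref{L:permutation} to the two path components $L$ and $J\cup R$, obtaining a permutation $\varphi$ of $\{L,J\cup R\}$. If $\varphi$ is the identity, then $M\cap L$ and $\{r\}$ are two disjoint nonempty closed $f$-invariant subsets of $M$, contradicting minimality. If $\varphi$ is the swap, then $f(J)\subseteq L$; density of $J$ in $X$ together with closedness of $L$ upgrades this to $f(X)\subseteq L$, so $f(L)\subseteq L$, but the swap also forces $f(L)\subseteq J\cup R$, giving $f(L)\subseteq L\cap(J\cup R)=\emptyset$, which is absurd. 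The subtlety to beware of is that neither the topology of $X$ (density of $J$, closedness of $L$) nor the minimality of $M$ alone is enough; only their combination rules out the swap.
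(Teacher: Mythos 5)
Your proof is correct and follows essentially the same route as the paper: the path-component decomposition from Remark~\ref{P:P-C}, Lemma~\ref{L:MandJ} when $M$ meets $J$, the retract machinery of Lemmas~\ref{L:Dug} and~\ref{L:Mretract} for the reverse inclusion, and Lemma~\ref{L:minJ} for the description of $\mathcal{M}(J\cup R)$. The one place where you genuinely add something is the ``mixed case'' $M\cap J=\emptyset$, $r\in M$, $M\cap L\neq\emptyset$: the paper disposes of this with the bare assertion that a non-singleton minimal set meeting $J\cup R$ must meet $J$, whereas you actually justify it -- the identity permutation splits $M$ into two proper closed invariant pieces, and the swap forces $f(J)\subseteq L$, hence $f(X)=\overline{f(J)}\subseteq L$ by density of $J$, which collides with $f(L)\subseteq J\cup R$ and $L\cap(J\cup R)=\emptyset$. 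This is exactly the argument behind Lemma~\ref{L:XdoRemainderu}, and your observation that neither minimality nor the density of $J$ alone suffices is accurate. Your explicit constant extension of $\tilde g$ off the arc $A$ is also fine; the paper gets the same conclusion more quickly by invoking that an arc is an absolute retract of $X$ and applying Lemma~\ref{L:Mretract} directly.
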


\begin{proof}
	Recall that, by~Remark~\ref{P:P-C}, the path components of $X$ are $J\cup R$ and $L$. Take $M\in\mathcal{M}(X)$ and fix a continuous map $f\colon X\to X$ such that $\left(M, f\vert_M\right)$ is a~minimal dynamical system. If $M\cap\left(J\cup R\right) = \emptyset$, then $M\subseteq L$. Because $M\subseteq L$ is $f$-invariant, the path component $L$~is~mapped by $f$ into $L$ by~Remark~\ref{P:P-CdoP-C}. So $M$ is also a minimal set of~$f\vert_{L}\colon L\to L$, i.e., $M\in\mathcal{M}(L)$.
	
	On the other hand, if $M\cap\left(J\cup R\right) \neq \emptyset$, then there are two possibilities: either $M$ is a singleton or $M$ is not a singleton. If $M$ is a singleton, then $M$ is a fixed point of $f$, because $M$ is $f$-invariant. Then, by~Remark~\ref{P:P-CdoP-C}, the path component $J\cup R$ is mapped by $f$ into $J\cup R$. Thus $M$ is a minimal set of $f\vert_{J\cup R}\colon J\cup R\to  J\cup R$, i.e., $M\in\mathcal{M}\left(J\cup R\right)$. On~the~other hand, if $M$ is not a singleton, then $M\cap J\neq\emptyset$. Since $J\cup R$ is a path component of $X$ (and, trivially, $J\subset J\cup R$), by~Lemma~\ref{L:MandJ}, $f\left(J\cup R\right)\subseteq J\cup R$ and $M\subseteq J\cup R$. Thus $M$ is a minimal set of~$f\vert_{J\cup R}\colon J\cup R\to  J\cup R$, i.e., $M\in\mathcal{M}\left(J\cup R\right)$.
 	
	To prove the converse, assume first that $M\in\mathcal{M}\left(L\right)$. Since $L\subseteq X$ is a locally connected continuum and $\dim(X\setminus L) = 1$ (in fact, $X\setminus L=J\cup R$ is a ray), $M\in\mathcal{M}(X)$ by~Lemmas~\ref{L:Dug} and~\ref{L:Mretract}. Now assume that $M\in\mathcal{M}\left(J\cup R\right)$. The set $M$ is a compact and closed subset of~the~ray $J\cup R$, therefore there is an arc $A$ such that $M\subseteq A\subset J\cup R$. Since~the~arc $A$ is a retract of $X$, $M\in\mathcal{M}(X)$ by~Lemma~\ref{L:Mretract}.
	
	Since $J\cup R$ is a ray, the final assertion of the theorem follows from~Lemma~\ref{L:minJ}.
\end{proof}

\begin{cor}\label{Cor:TSC}
	Let the assumptions of Theorem~\ref{V:TSC} hold and suppose that $L$ is a~local dendrite. Then $M\subseteq X$ is a minimal set on $X$ if and only if exactly one of~the~following conditions holds:
	\renewcommand{\labelenumi}{(\arabic{enumi})}
	\begin{enumerate}
		\item $M\subseteq L$ and $M$ is either a finite set or a cantoroid or a union of finitely many pairwise disjoint circles;
		\item $M\subseteq J\cup R$ and $M$ is either a finite set or a Cantor set.
	\end{enumerate}
\end{cor}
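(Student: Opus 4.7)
The plan is to reduce the corollary immediately to Theorem~\ref{V:TSC} together with the known characterization of minimal sets on local dendrites, Theorem~\ref{V:known_characterization}\ref{it:locDend}. By~Theorem~\ref{V:TSC}, under the standing hypotheses we already have the disjoint decomposition
\begin{equation*}
\mathcal{M}(X) = \mathcal{M}(L) \sqcup \mathcal{M}(J \cup R),
\end{equation*}
and the second summand is described as the collection of all finite and Cantor subsets of the ray $J \cup R$. So the only thing left is to rewrite $\mathcal{M}(L)$ in the form prescribed by~(1).

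For the forward direction, I would fix $M \in \mathcal{M}(X)$ and split on which side of the decomposition it lies. If $M \in \mathcal{M}(J \cup R)$, then $M \subseteq J \cup R$ and $M$ is either a finite set or a Cantor set by the moreover-part of Theorem~\ref{V:TSC}, giving condition~(2). If $M \in \mathcal{M}(L)$, then $M \subseteq L$; since $L$ is assumed to be a local dendrite, Theorem~\ref{V:known_characterization}\ref{it:locDend} applied to the minimal dynamical system $\left(M, f\vert_M\right)$ inside $L$ tells us that $M$ is a finite set, a cantoroid, or a finite union of pairwise disjoint circles, giving condition~(1). The disjointness of $L$ and $J \cup R$ (via $L \cap R = \emptyset$ and $L \cap J = \emptyset$) ensures that exactly one of~(1) and~(2) holds.

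For the converse, I would assume either~(1) or~(2) holds and show $M \in \mathcal{M}(X)$. Under~(2), $M$ is a finite or Cantor subset of the ray $J \cup R$, hence $M \in \mathcal{M}(J \cup R)$ by Lemma~\ref{L:minJ}, and then $M \in \mathcal{M}(X)$ by Theorem~\ref{V:TSC}. Under~(1), Theorem~\ref{V:known_characterization}\ref{it:locDend} gives $M \in \mathcal{M}(L)$, and again Theorem~\ref{V:TSC} delivers $M \in \mathcal{M}(X)$.

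There is essentially no technical obstacle here: the argument is a bookkeeping assembly of Theorem~\ref{V:TSC}, Theorem~\ref{V:known_characterization}\ref{it:locDend}, and Lemma~\ref{L:minJ}. The only minor point to be careful about is articulating the \emph{exactly one} clause, which follows at once from the disjointness of $L$ and $J \cup R$ in $X$.
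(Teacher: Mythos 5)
Your proposal is correct and follows exactly the route the paper takes: the paper's proof is the one-line observation that the corollary follows from Theorem~\ref{V:TSC} combined with Theorem~\ref{V:known_characterization}\ref{it:locDend}, and your write-up simply spells out that bookkeeping in both directions. No gaps.
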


\begin{proof}
	The statement follows from~Theorems~\ref{V:TSC} and~\ref{V:known_characterization}\ref{it:locDend}.
\end{proof}

An example of a space $X=L\cup J\cup R$ belonging to $\mathcal{C}$ with $L$ being a nondegenerate continuum and $R$ being a singleton such that $L\cap R=\emptyset$ is the~topologist's sine curve $TSC$ (see Figure~\ref{fig:Priklady_z_C}(b)). In this case, $L$ is a local dendrite, and therefore we obtain a~complete characterization of minimal sets on $TSC$ from Corollary~\ref{Cor:TSC}. Since $TSC$ contains no circles and every cantoroid in $TSC$ is a Cantor set, we get
\begin{equation*}
M\in\mathcal{M}(TSC)\iff
\begin{cases}
M \text{ is a finite or Cantor set in } L,\\
\qquad\qquad\text{or}\\
M \text{ is a finite or Cantor set in } J\cup R.
\end{cases}
\end{equation*}

\section{Remainders $L, R$ are nondegenerate continua}\label{Sec:Nondeg}

In this section we prove \emph{(\ref{it:nondeg_nondisj})} and \emph{(\ref{it:nondeg})} from Theorem~\ref{V:Main} and their corollaries for~$L, R$ being local dendrites. Moreover, we give examples of continua for which the equality in Theorem~\ref{V:Main}\emph{(\ref{it:subseteq})} or in Theorem~\ref{V:Main}\emph{(\ref{it:supseteq})} holds.

\subsection{The intersection of remainders $L$ and $R$ is nonempty}

\begin{thm}\label{V:nedegNedisj}
	Let $X=L\cup J\cup R$ belong to $\mathcal{C}$. Assume that $L, R$ are nondegenerate and $L\cap R\neq\emptyset$. Then
	\begin{equation*}
	\mathcal{M}(X) = \mathcal{M}\left(L\cup R\right)\sqcup \mathcal{M}\left(J\right)
	\end{equation*}
	and $\mathcal{M}(J)$ is the system of all finite and Cantor subsets of $J$.
\end{thm}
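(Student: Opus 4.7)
The plan is to mimic the structure of Theorem~\ref{V:TSC}, using the description of path components given in Remark~\ref{P:P-C}\ref{it:P-C}: under the present hypotheses the path components of $X$ are exactly $J$ and $L\cup R$. The last assertion of the theorem follows immediately from Lemma~\ref{L:minJ} since $J$ is a line, so I can concentrate on the two-sided set equality, and the disjointness of the union is automatic from $J\cap(L\cup R)=\emptyset$.

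For the inclusion $\mathcal{M}(X)\subseteq \mathcal{M}(L\cup R)\sqcup\mathcal{M}(J)$, I would fix $M\in\mathcal{M}(X)$ with witness map $f\colon X\to X$ and split according to whether $M$ meets $J$. If $M\cap J\neq\emptyset$, then the path component containing $J$ is $J$ itself, so Lemma~\ref{L:MandJ} gives $f(J)\subseteq J$ and $M\subseteq J$; hence $M$ is a minimal set of $f\vert_J$ and $M\in\mathcal{M}(J)$. If $M\cap J=\emptyset$, then $M\subseteq L\cup R$; by Remark~\ref{P:P-CdoP-C}, $f$ sends the path component $L\cup R$ into a single path component, and since $\emptyset\neq M=f(M)\subseteq f(L\cup R)$ is disjoint from $J$, that image must lie in $L\cup R$. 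Thus $f$ restricts to a continuous self-map of $L\cup R$ having $M$ as a minimal set, so $M\in\mathcal{M}(L\cup R)$.

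For the opposite inclusion I would produce retractions. First, $L\cup R$ is a locally connected continuum: it is closed in $X$, connected since $L\cap R\neq\emptyset$, and local connectedness at a point of $L\cap R$ follows by gluing connected neighborhoods in $L$ and in $R$ along that point. Since $X\setminus(L\cup R)=J$ is one-dimensional, Lemma~\ref{L:Dug} supplies a retraction $X\to L\cup R$, and then Lemma~\ref{L:Mretract} gives $\mathcal{M}(L\cup R)\subseteq \mathcal{M}(X)$. For $M\in\mathcal{M}(J)$, Lemma~\ref{L:minJ} shows $M$ is finite or a Cantor set, hence compact, so it lies in some closed arc $A\subset J$; by Theorem~\ref{V:known_characterization}\ref{it:interval}, $M\in\mathcal{M}(A)$, and since an arc is an absolute retract and $A$ is closed in $X$, $A$ is a retract of $X$, so Lemma~\ref{L:Mretract} yields $M\in\mathcal{M}(X)$.

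No step looks genuinely hard; the only point requiring a brief verification is that $L\cup R$ is a locally connected continuum (so that Lemma~\ref{L:Dug} applies), and that in the $M\cap J=\emptyset$ case one can rule out the possibility that $f$ sends $L\cup R$ into $J$—this is exactly where nonemptiness of $M$ and its invariance are used.
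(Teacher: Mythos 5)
Your proposal is correct and follows essentially the same route as the paper's proof: the same split by whether $M$ meets $J$, the same use of Lemma~\ref{L:MandJ} and Remark~\ref{P:P-CdoP-C} for the forward inclusion, and the same retraction arguments via Lemmas~\ref{L:Dug} and~\ref{L:Mretract} (onto $L\cup R$ and onto an arc $A\subset J$) for the reverse inclusion. The extra details you supply (local connectedness of $L\cup R$ at points of $L\cap R$, and ruling out $f(L\cup R)\subseteq J$ via the invariance of the nonempty set $M$) are correct and only make explicit what the paper leaves implicit.
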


\begin{proof}
Recall that, by~Remark~\ref{P:P-C}, the path components of $X$ are $J$ and~$L\cup R$. Let $M\in\mathcal{M}(X)$. Fix a continuous map $f\colon X\to X$ such that $\left(M,f\vert_M\right)$ is a minimal dynamical system. If $M\cap J=\emptyset$, then $M\subseteq L\cup R$. Because $M\subseteq L\cup R$ is $f$-invariant, $L\cup R$ is mapped by $f$ into $L\cup R$ by~Remark~\ref{P:P-CdoP-C}. So $M$ is also a minimal set of $f\vert_{L\cup R}\colon L\cup R\to L\cup R$, i.e., $M\in\mathcal{M}\left(L\cup R\right)$.

Now assume that $M\in\mathcal{M}(X)$ is such that $M\cap J\neq\emptyset$. The free interval $J$ is a~path component of $X$, therefore, by~Lemma~\ref{L:MandJ}, $f(J)\subseteq J$ and $M\subseteq J$. Thus $M$ is a minimal set of $f\vert_J\colon J\to J$, i.e., $M\in\mathcal{M}(J)$.

On the other hand, assume that $M\in\mathcal{M}\left(L\cup R\right)$. Since both $L,R\subseteq X$ are locally connected continua and $L\cap R\neq\emptyset$, also $L\cup R$ is a locally connected continuum. Moreover, $\dim\left(X\setminus\bigl(L\cup R\bigr)\right)=1$ (in fact, $X\setminus\bigl(L\cup R\bigr)=J$ is an interval), and therefore, by~Lemmas~\ref{L:Dug} and~\ref{L:Mretract}, $M\in\mathcal{M}(X)$.

Finally, suppose that $M\in\mathcal{M}\left(J\right)$. Since $M$ is a compact subset of~a~free interval $J$, there exists an arc $A\subset J$ with $M\subseteq A$. Since $A$ is a~retract of $X$, $M\in \mathcal{M}(X)$ by~Lemma~\ref{L:Mretract}.

Since $J$ is a free interval, the final assertion of the theorem follows from~Lemma~\ref{L:minJ}.
\end{proof}

\begin{cor}\label{Cor:nedegNedisj}
	Let the assumptions of~Theorem~\ref{V:nedegNedisj} hold and suppose that $L, R$ are local dendrites. Then $M\subseteq X$ is a minimal set on $X$ if and only if exactly one of~the~following conditions holds:
	\renewcommand{\labelenumi}{(\arabic{enumi})}
	\begin{enumerate}
		\item $M\subseteq L\cup R$ and $M$ is either a finite set or a cantoroid or a~union of finitely many pairwise disjoint circles;
		\item $M\subseteq J$ and $M$ is either a finite set or a Cantor set.
	\end{enumerate}
\end{cor}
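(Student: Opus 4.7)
The plan is to read off everything from the clean disjoint decomposition $\mathcal{M}(X)=\mathcal{M}(L\cup R)\sqcup\mathcal{M}(J)$ supplied by Theorem~\ref{V:nedegNedisj}, combined with the local-dendrite characterization from Theorem~\ref{V:known_characterization}\ref{it:locDend}. Since $J$ is disjoint from $L\cup R$, the two possibilities for $M$ in the corollary are automatically mutually exclusive, which takes care of the ``exactly one'' clause. The piece $\mathcal{M}(J)$ is handled directly by Theorem~\ref{V:nedegNedisj} (or Lemma~\ref{L:minJ}): every $M\in\mathcal{M}(J)$ is either finite or a Cantor set, giving case~(2).

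It remains to describe $\mathcal{M}(L\cup R)$ by case~(1). My plan is to show that, under the added hypothesis that $L$ and $R$ are local dendrites, the continuum $L\cup R$ is itself a local dendrite, after which Theorem~\ref{V:known_characterization}\ref{it:locDend} immediately gives the threefold list: finite set, cantoroid, or disjoint union of finitely many circles. Connectedness and compactness of $L\cup R$ are immediate from $L\cap R\neq\emptyset$ and compactness of both summands. Local connectedness at a point $x\in L\setminus R$ (resp.\ $x\in R\setminus L$) follows from closedness of $R$ (resp.\ $L$) together with local connectedness of $L$ (resp.\ $R$). At $x\in L\cap R$ I would combine a connected neighborhood of $x$ in $L$ with one in $R$: if $V_L$, $V_R$ are open in $X$ with $V_L\cap L$ and $V_R\cap R$ the chosen neighborhoods, then $(V_L\cap V_R)\cap(L\cup R)$ is contained in their union and is open in $L\cup R$ around $x$, hence yields the required connected neighborhood.

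The main obstacle I foresee is verifying that $L\cup R$ contains only finitely many simple closed curves. Any simple closed curve in $L\cup R$ not already contained in $L$ or $R$ must consist of arcs alternating between the two remainders and meeting at points of $L\cap R$. The challenge is to rule out an infinite family of such curves using only that each of $L,R$ has finitely many circles and that $L,R$ arise as remainders of ray compactifications inside $X\in\mathcal{C}$; control on the structure of $L\cap R$ coming from the local-dendrite assumption on each side should suffice. Once this finiteness is in hand, both directions of the equivalence follow: by Theorem~\ref{V:nedegNedisj} a minimal set on $X$ is either contained in $J$ (case~(2)) or contained in $L\cup R$, in which case Theorem~\ref{V:known_characterization}\ref{it:locDend} applied to the local dendrite $L\cup R$ puts it into case~(1); conversely, any set of the form described in (1) or (2) lies in $\mathcal{M}(L\cup R)$ or $\mathcal{M}(J)$ respectively, and is therefore an element of $\mathcal{M}(X)$ by Theorem~\ref{V:nedegNedisj}.
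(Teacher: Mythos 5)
Your overall route is exactly the paper's: the proof given there consists of the single sentence that the corollary follows from Theorem~\ref{V:nedegNedisj} and Theorem~\ref{V:known_characterization}\ref{it:locDend}, i.e., from the decomposition $\mathcal{M}(X)=\mathcal{M}(L\cup R)\sqcup\mathcal{M}(J)$ together with the local-dendrite characterization applied to $L\cup R$. The parts of your argument concerning $\mathcal{M}(J)$, the mutual exclusivity of (1) and (2), and the local connectedness of $L\cup R$ are all fine (the last is asserted explicitly in the proof of Theorem~\ref{V:nedegNedisj}).

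The gap is the step you yourself flag and then leave open: that $L\cup R$ contains only finitely many circles. This is not a formality, and ``control on the structure of $L\cap R$ \dots should suffice'' is not an argument. The union of two local dendrites with nonempty intersection need not be a local dendrite: take $L$ a straight arc and $R$ an arc meeting $L$ in an infinite set, say in a convergent sequence of points together with its limit; then $L\cup R$ contains infinitely many pairwise distinct circles accumulating at that limit point, which therefore has no neighborhood that is a dendrite, and Theorem~\ref{V:known_characterization}\ref{it:locDend} no longer applies to $L\cup R$. To close your proof you must either (a) show that for $X\in\mathcal{C}$ the intersection $L\cap R$ cannot generate infinitely many circles in $L\cup R$ --- nothing in the definition of $\mathcal{C}$ obviously forces this, since $L$ and $R$ are the limit sets of the two ends of $J$ and can a priori meet in any closed set --- or (b) characterize $\mathcal{M}(L\cup R)$ by some means other than the local-dendrite theorem. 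As written, your proposal establishes the corollary only under the additional (unverified) hypothesis that $L\cup R$ is itself a local dendrite; the paper's one-line proof relies on the same point without comment, so you have in fact located the one place where the deduction is not automatic, but you have not supplied the missing verification.
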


\begin{proof}
	The statement follows from~Theorems~\ref{V:nedegNedisj} and~\ref{V:known_characterization}\ref{it:locDend}.
\end{proof}

An example of a space $X=L\cup J\cup R$ belonging to $\mathcal{C}$ with $L, R$ being nondegenerate continua such that $L\cap R\neq\emptyset$ is in Figure~\ref{fig:Priklady_z_C}(c). In this case, $L$ and $R$ are local dendrites, and therefore we obtain a complete characterization of minimal sets on~such~$X$ from Corollary~\ref{Cor:nedegNedisj}.

\subsection{Remainders $L, R$ are disjoint}

Recall that $\mathcal{M^*}(X;Y)$ was introduced in~Definition~\ref{D:M(X;Y)}.

\begin{thm}\label{V:3}
	Let $X=L\cup J\cup R$ belong to $\mathcal{C}$. Assume that $L, R$ are nondegenerate and $L\cap R=\emptyset$. Then
	\begin{enumerate}
	\item\label{it:V3sub}
	$\mathcal{M}(X) \subseteq \mathcal{M}(J)\sqcup \mathcal{M}\left(L\right)\sqcup \mathcal{M}\left(R\right)\sqcup \mathcal{M^*}\left(L; R\right),$
	\item\label{it:V3sup}
	$\mathcal{M}(X) \supseteq \mathcal{M}(J)\sqcup \mathcal{M}\left(L\right)\sqcup \mathcal{M}\left(R\right)$
	\end{enumerate}
	and $\mathcal{M}(J)$ is the system of all finite and Cantor subsets of $J$.
\end{thm}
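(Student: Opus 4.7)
The plan is to prove the two inclusions separately; the characterization $\mathcal{M}(J)=\{\text{finite or Cantor subsets of }J\}$ is Lemma~\ref{L:minJ} applied to the line $J$. For (\ref{it:V3sub}), I would fix $M\in\mathcal{M}(X)$ with a witnessing continuous $f\colon X\to X$ and split according to whether $M\cap J$ is empty. By Remark~\ref{P:P-C}\emph{(\ref{it:P-C})} the path components of $X$ are precisely $J$, $L$, and $R$. If $M\cap J\neq\emptyset$, Lemma~\ref{L:MandJ} (applied with $P=J$) yields $f(J)\subseteq J$ and $M\subseteq J$, so $M\in\mathcal{M}(J)$. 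Otherwise $M\subseteq L\cup R$, and I would apply Lemma~\ref{L:permutation} to the one or two path components among $\{L,R\}$ that meet $M$: if only one is hit, the trivial permutation makes it $f$-invariant and gives $M\in\mathcal{M}(L)$ or $M\in\mathcal{M}(R)$.

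The heart of (\ref{it:V3sub}) is the remaining case when $M$ meets both $L$ and $R$. The resulting permutation on $\{L,R\}$ cannot be the identity, since otherwise $M\cap L$ would be a proper nonempty closed $f$-invariant subset, contradicting minimality; hence $f(L)\subseteq R$ and $f(R)\subseteq L$. Combined with $f(M)=M$, this yields $f(M\cap L)=M\cap R$ and $f(M\cap R)=M\cap L$, producing mutual continuous surjections between $M\cap L$ and $M\cap R$ and hence $|M\cap L|=|M\cap R|$ (by direct counting when $M$ is a finite cycle; by Schr\"oder--Bernstein otherwise). Since $L$ and $R$ are disjoint compacta, the subspace $L\cup R\subseteq X$ is the topological disjoint union $L\sqcup R$, and the restriction $f|_{L\cup R}\colon L\cup R\to L\cup R$ (well defined as $f(L\cup R)\subseteq L\cup R$) witnesses $M\in\mathcal{M^*}(L;R)$.

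For (\ref{it:V3sup}) I would invoke Lemma~\ref{L:Mretract} three times. For $\mathcal{M}(J)\subseteq\mathcal{M}(X)$, enclose $M\in\mathcal{M}(J)$ in a compact arc $A\subset J$; arcs are absolute retracts, so $A$ is a retract of $X$ and the lemma applies. The main technical step is $\mathcal{M}(L)\subseteq\mathcal{M}(X)$, and this is the principal obstacle: a direct application of Lemma~\ref{L:Dug} to $L\subseteq X$ can fail because $X\setminus L=J\cup R$ need not be at most one-dimensional when $R$ is high-dimensional. My plan is to circumvent this by applying Lemma~\ref{L:Dug} inside the smaller compactum $L\cup J_L$, where $(L\cup J_L)\setminus L=J_L$ is a ray, producing a retraction $r_L\colon L\cup J_L\to L$. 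After replacing $J_L,J_R$ if necessary by subrays with single-point overlap $J_L\cap J_R=\{p\}$ (a reduction that preserves the compactification property), I would extend $r_L$ to $\tilde r\colon X\to L$ by the constant value $r_L(p)$ on $R\cup J_R$. Continuity is immediate from the closed pasting lemma, since $L\cup J_L$ and $R\cup J_R$ are closed in $X$, cover $X$, meet only at $p$, and the two partial definitions agree there. Thus $\tilde r$ is a retraction of $X$ onto $L$ and Lemma~\ref{L:Mretract} delivers $\mathcal{M}(L)\subseteq\mathcal{M}(X)$; the inclusion $\mathcal{M}(R)\subseteq\mathcal{M}(X)$ is symmetric.
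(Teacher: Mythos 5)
Your proposal is correct and follows essentially the same route as the paper: split by path components, use Lemma~\ref{L:MandJ} and Lemma~\ref{L:permutation} to force $f(L)\subseteq R$, $f(R)\subseteq L$ in the mixed case, and for $\mathcal{M}(L)\subseteq\mathcal{M}(X)$ apply Lemma~\ref{L:Dug} to the one-sided compactification $L\cup J_L$ rather than to $L\subseteq X$ directly (the paper uses $X_0=L\sqcup(0,1/2]$ and extends $g_L\circ r_L$ by a constant, which is exactly your retraction $\tilde r$ composed with $g_L$). Your explicit remark on why $\dim(X\setminus L)\leq 1$ may fail, and the Schr\"oder--Bernstein justification of $|M\cap L|=|M\cap R|$, are correct refinements of points the paper passes over silently.
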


\begin{proof}
	\emph{(1)} Assume that $M\in\mathcal{M}(X)$. Fix a continuous map $f\colon X\to X$ such that $\left(M,f\vert_M\right)$ is a minimal dynamical system. By~Remark~\ref{P:P-C}\ref{it:P-C}, the path components of $X$ are $J, L$ and $R$. If $M\cap J\neq \emptyset$, then, by~Lemma~\ref{L:MandJ}, $f(J)\subseteq J$ and $M\subseteq J$. Thus $M$ is a minimal set of~$f\vert_J\colon J\to J$, i.e., $M\in\mathcal{M}(J)$.
	
	Now, assume that $M\cap J=\emptyset$, and so $M\subseteq L\sqcup R$. If also $M\cap R=\emptyset$, then $M\subseteq L$. Since $M\subseteq L$ is $f$-invariant, Remark~\ref{P:P-CdoP-C} implies that the~path component $L$ is mapped by $f$ into $L$. Therefore $M$ is a~minimal set of $f\vert_{L}\colon L\to L$, i.e., $M\in\mathcal{M}\left(L\right)$.
	
	If $M\cap J=\emptyset$ and also $M\cap L=\emptyset$, then $M\subseteq R$. In this case we can use the~same arguments as above and conclude that $M\in\mathcal{M}\left(R\right)$.
	
	Finally, let $M\cap J=\emptyset$, $M\cap L\neq\emptyset$ and $M\cap R\neq\emptyset$. By~Lemma~\ref{L:permutation}, either $f(L)\subseteq L$ and $f(R)\subseteq R$, or $f(L)\subseteq R$ and $f(R)\subseteq L$. The former case contradicts minimality of $M$ (in fact, both the subsets $M\cap L$ and $M\cap R$ of $M$ would be nonempty, closed and $f$-invariant), thus the latter case is true. Since $f(M)=M$, we have $f(M\cap L) = M\cap R$ and vice versa, and so the cardinality of $M\cap L$ is equal to the cardinality of $M\cap R$. Moreover, since $M\subseteq L\sqcup R$, $f\left(L\right)\subseteq R$ and $f\left(R\right)\subseteq L$, the set $M$ is minimal for $f\vert_{L\sqcup R}\colon L\sqcup R \to L\sqcup R$. Thus $M\in \mathcal{M^*}\left(L; R\right)$.
	
	\emph{(2)} Assume that $M\in\mathcal{M}(J)$. The set $M$ is compact in the free interval $J$, therefore there is an arc $A\subset J$ containing $M$. Since $A$ is a~retract of both $J$ and $X$, $M\in \mathcal{M}(X)$ by~Lemma~\ref{L:Mretract}.
	
	Finally, we prove that $M\in\mathcal{M}\left(L\right)$ implies $M\in\mathcal{M}(X)$; proving that $M\in\mathcal{M}\left(R\right)$ implies $M\in\mathcal{M}(X)$ is analogous, so we are not going to do that. Let $M\in\mathcal{M}\left(L\right)$ and fix a continuous map $g_L\colon L\to L$ such that $\left(M,g_L\vert_M\right)$ is a minimal dynamical system. By~Remark~\ref{P:imagine}, we can identify the free interval $J$ with the real interval $(0,1)$ such that $L$ and $R$ are $\bigcap\limits_{\varepsilon>0}\overline{(0,\varepsilon)}$ and $\bigcap\limits_{\varepsilon>0}\overline{(1-\varepsilon,1)}$, respectively. Denote $J_0=(0,1/2]$ and $X_0=L\sqcup J_0$. Using this notation, $X_0$ is the compactification of the ray $J_0$ with the remainder $L$. Since $L\subseteq X_0$ is a~locally connected continuum and $\dim(X_0\setminus L)=1$ (in fact, $X_0\setminus L=J_0$ is a~ray), by~Lemma~\ref{L:Dug} there is a retraction $r_L\colon X_0\to L$. Define
	\begin{equation*}
	f_L(x)=
	\begin{cases}
	g_L\circ r_L(x) & \text{if } x\in X_0,\\
	g_L\circ r_L(1/2) & \text{if } x\in X\setminus X_0.
	\end{cases}
	\end{equation*}
	Obviously, $f_L\colon X\to L$ is a continuous extension of $g_L$ over the entire $X$. Moreover, $M$ is a minimal set of $\left(X,f_L\right)$. Thus $M\in\mathcal{M}(X)$.
	
	Since $J$ is a free interval, the final assertion of the theorem follows from~Lemma~\ref{L:minJ}.
\end{proof}

\begin{cor}\label{Cor:nedegDisj}
Let the assumptions of~Theorem~\ref{V:3} hold and suppose that $L, R$ are local dendrites. Then:
\renewcommand{\labelenumii}{(\alph{enumii})}
\begin{enumerate}
	\item If $M\subseteq X$ is a minimal set on $X$, then exactly one of the following conditions holds:
	\begin{enumerate}
		\item $M\subseteq J$ and $M$ is either a finite set or a Cantor set;
		\item either $M\subseteq L$ or $M\subseteq R$, and $M$ is either a finite set or a cantoroid or a union of finitely many pairwise disjoint circles;
		\item $M\subseteq L\sqcup R$, the sets $M\cap L$, $M\cap R$ have the same cardinalities and $M$ is either a finite set or a cantoroid or a union of~finitely many pairwise disjoint circles.
	\end{enumerate}
	\item If any of the conditions (a), (b) holds, then $M$ is a minimal set on $X$.
\end{enumerate}
\end{cor}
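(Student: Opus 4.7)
The plan is to combine Theorem~\ref{V:3} with the classification of minimal sets on local dendrites (Theorem~\ref{V:known_characterization}\ref{it:locDend}); the only nontrivial extra step is a short connectivity argument in the mixed case $M\in\mathcal{M^*}(L;R)$. For part~(1), fix $M\in\mathcal{M}(X)$. By Theorem~\ref{V:3}\emph{(\ref{it:V3sub})}, $M$ lies in exactly one of $\mathcal{M}(J), \mathcal{M}(L), \mathcal{M}(R), \mathcal{M^*}(L;R)$. Membership in $\mathcal{M}(J)$ gives (a) by Lemma~\ref{L:minJ}. Membership in $\mathcal{M}(L)$ or $\mathcal{M}(R)$ gives (b), since $L$ and $R$ are local dendrites and Theorem~\ref{V:known_characterization}\ref{it:locDend} applies. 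The three conditions (a), (b), (c) are mutually exclusive, being distinguished by whether $M\subseteq J$, $M$ lies entirely in exactly one of $L, R$, or $M$ meets both $L$ and $R$.

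The main case is $M\in\mathcal{M^*}(L;R)$. Fix a continuous $f\colon L\sqcup R\to L\sqcup R$ witnessing minimality of $M$; as in the proof of Theorem~\ref{V:3}\emph{(\ref{it:V3sub})}, Lemma~\ref{L:permutation} applied to the path components $L, R$ forces $f$ to swap them. Hence $f^2$ preserves both $L$ and $R$, and $M\cap L$, $M\cap R$ are nonempty, closed and $f^2$-invariant. If $N\subsetneq M\cap L$ were a nonempty closed $f^2$-invariant set, then $N\cup f(N)$ would be a nonempty proper closed $f$-invariant subset of $M$, contradicting minimality; hence $M\cap L\in\mathcal{M}(L)$ and symmetrically $M\cap R\in\mathcal{M}(R)$. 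Theorem~\ref{V:known_characterization}\ref{it:locDend} then says each of these is finite, a cantoroid, or a union of finitely many pairwise disjoint circles. The equal-cardinality condition in the definition of $\mathcal{M^*}$ implies that $M\cap L$ and $M\cap R$ are either both finite or both infinite. In the infinite case, the continuous surjections $f\colon M\cap L\to M\cap R$ and $f\colon M\cap R\to M\cap L$ exclude the remaining mixed possibility: a union of $k$ circles has at most $k$ connected components and hence so does its continuous image, while a cantoroid has infinitely many components (its dense degenerate components are pairwise distinct singletons). Thus $M\cap L$ and $M\cap R$ are of the same type; since each of the three classes is stable under disjoint union (for cantoroids one uses that $L, R$ are disjoint closed, so the components of $M$ coincide with those of $M\cap L$ and of $M\cap R$, and no new isolated points appear), $M$ itself is of the required type and (c) holds.

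For part~(2), if (a) holds then $M\in\mathcal{M}(J)$ by Lemma~\ref{L:minJ}, and if (b) holds then $M\in\mathcal{M}(L)$ or $M\in\mathcal{M}(R)$ by Theorem~\ref{V:known_characterization}\ref{it:locDend}; in both cases Theorem~\ref{V:3}\emph{(\ref{it:V3sup})} yields $M\in\mathcal{M}(X)$. The main obstacle is the type-compatibility step in the $\mathcal{M^*}(L;R)$ case; no converse for (c) is claimed here, consistently with the paper's remark that Theorem~\ref{V:Main}\emph{(\ref{it:nondeg})} cannot be strengthened in general.
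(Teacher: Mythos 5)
Your proposal is correct and follows the same overall strategy as the paper, whose proof of this corollary is a one-line reference to Theorems~\ref{V:3} and~\ref{V:known_characterization}\ref{it:locDend}. The one place where you go beyond the paper is the case $M\in\mathcal{M^*}(L;R)$, and rightly so: $L\sqcup R$ is disconnected, hence not a local dendrite, so Theorem~\ref{V:known_characterization}\ref{it:locDend} does not apply to it verbatim, and the paper leaves this step implicit. Your $f^2$-argument ($M\cap L$ and $M\cap R$ are minimal sets on $L$ and on $R$ respectively, their types must agree because $f$ maps each onto the other and a cantoroid has infinitely many components while a finite union of circles has finitely many, and each of the three classes is closed under separated disjoint union) is a valid way to close that gap. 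A slightly shorter alternative, closer in spirit to the paper's earlier unnumbered corollary of Theorem~\ref{V:Main}, is to join $L$ and $R$ by an arc $A$ to form the local dendrite $Y=L\cup A\cup R$ and extend the witnessing map $f\colon L\sqcup R\to L\sqcup R$ over $A$ by sending $A$ onto a path in $Y$ joining the images of its end points; then $M$ is a minimal set on the local dendrite $Y$ and Theorem~\ref{V:known_characterization}\ref{it:locDend} applies directly. Both routes are sound; yours has the extra benefit of recording that $M\cap L$ and $M\cap R$ are themselves minimal sets of the same type.
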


\begin{proof}
	The statements follow from Theorems~\ref{V:3} and~\ref{V:known_characterization}\ref{it:locDend}.
\end{proof}

Examples of spaces $X=L\cup J\cup R$ belonging to $\mathcal{C}$ with $L, R$ being nondegenerate, disjoint continua are in Figure~\ref{fig:Priklady_z_C}(d) and also in the remaining two subsections.

\subsection{A continuum with equality in Theorem~\ref{V:3}(\emph{\ref{it:V3sub}})}\label{Sec:Pr1}
In this subsection we give an example of a continuum $X$ for which the equality in~Theorem~\ref{V:3}(\emph{\ref{it:V3sub}}) (or, equivalently, in~Theorem~\ref{V:Main}\emph{(\ref{it:subseteq})}) holds. By $(x,y)$ and $[x,y]$ we denote an open interval and a closed interval with end points $x,y$, respectively. We use the~symbol $x\times y$ to denote a point in $\mathbb{R}^2$ with the first coordinate $x$ and the second coordinate $y$. The symbol $\deukl$ denotes the Euclidean distance; the symbol $ab$ denotes a line segment with end points $a,b\in\mathbb{R}^2$ where $a\neq b$. Recall that $\mathbb{N}_0$ denotes the set $\mathbb{N}\cup\{0\}$ where the symbol $\mathbb{N}$ denotes the set of all (positive) natural numbers.

\smallskip
\emph{Step~1. Construction of the continuum $X$.} We construct a space $X$ (see Figure~\ref{fig:2TSC}) which is a subcontinuum of $[0,2]\times[0,1]$.
\smallskip 

\begin{figure}[h]
	\centering
	\includegraphics[width=1\textwidth]{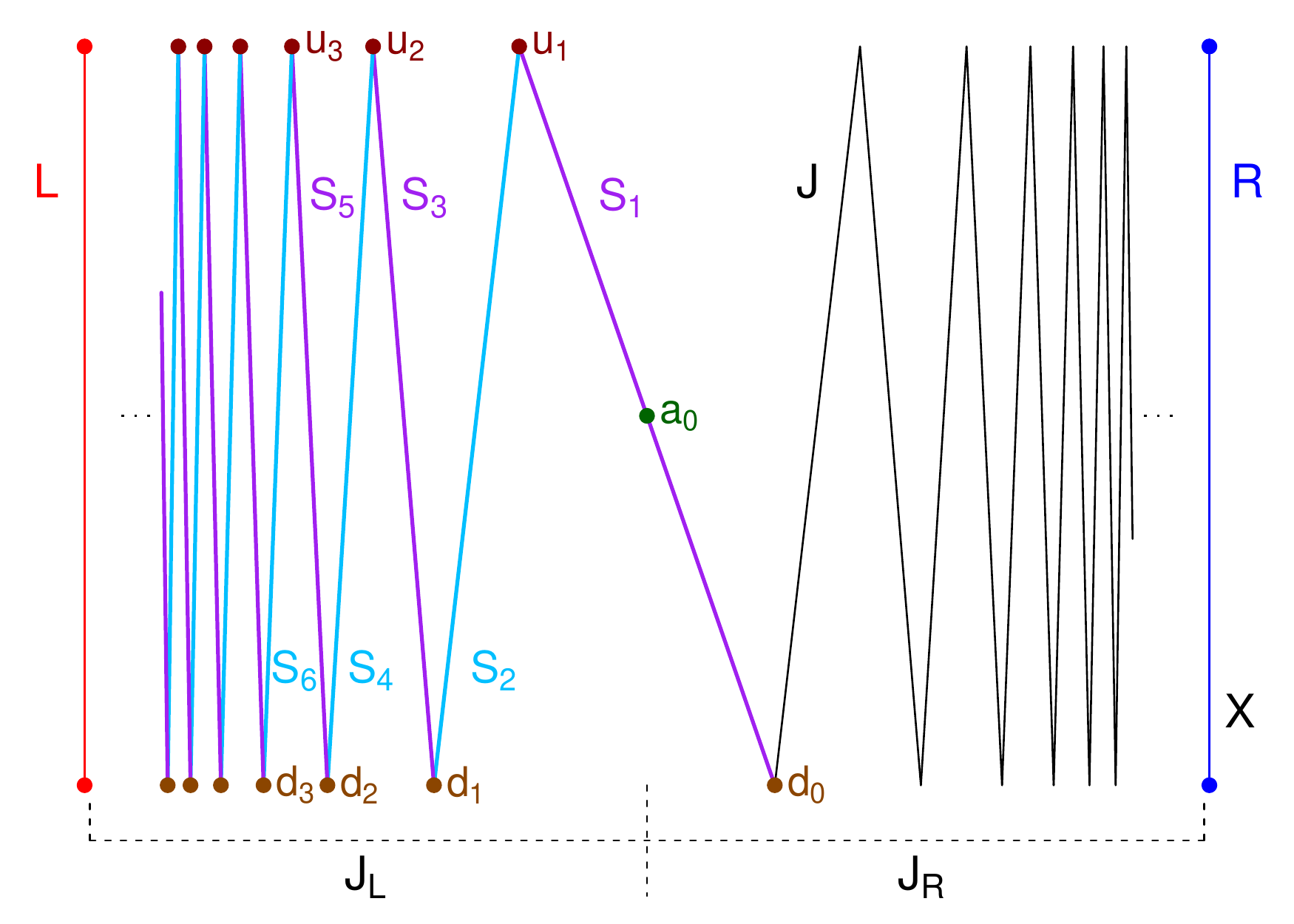}
	\caption[]{\label{fig:2TSC} The continuum $X$.
	}
\end{figure}

Define a continuous map $\varphi\colon (0,2)\to [0,1]$ such that:
\renewcommand{\labelenumi}{(\roman{enumi})}
\begin{itemize}
	\item $\varphi(1)=1/2$,
	\item $\varphi\left(\frac{1}{2n}\right)=1$ and $\varphi\left(\frac{1}{2n+1}\right)=0$ for every $n\in\mathbb{N}$,
	\item $\varphi$ is linear on $\left[\frac{1}{n+1},\frac{1}{n}\right]$ for every $n\in\mathbb{N}$,
	\item $\varphi(2-x)=1-\varphi(x)$ for every $x\in (1,2)$.
\end{itemize}

Let $J$ denote the graph of $\varphi$ and define $\Phi\colon (0,2)\to J$ such that $\Phi(x) = x\times\varphi(x)$ for every $x\in (0,2)$, i.e., $J=\left\{\Phi(x)\colon x\in(0,2)\right\}$. We define:
\begin{itemize}
	\item the ordering on $J$ such that $\Phi(x)<\Phi(y)$ if $x<y$ (where $x,y\in J$),
	\item the so-called taxi distance $\dtaxi$ on $J$ such that $\dtaxi\left(\Phi(x),\Phi(y)\right)$ is the (Euclidean) length of the set $\left\{\Phi(z)\colon z\in[x,y]\right\}$ (where $x,y\in J$); obviously $\deukl\vert_{J\times J}\leq\dtaxi$.
\end{itemize}

Let $u_n=\Phi\left(\frac{1}{2n}\right)$ and $d_n=\Phi\left(\frac{1}{2n+1}\right)$ for every $n\in\mathbb{N}$. Denote $d_0=\Phi\left(\frac32\right)$ and $a_0=\Phi(1)$, i.e., $a_0$ is the midpoint of $d_0u_1$. We use the~symbols $S_{2n-1}$ and $S_{2n}$ ($n\in\mathbb{N}$) to denote the line segments $d_{n-1}u_n$ and $u_{n}d_n$, respectively. The union of~$a_0u_1$ and $\bigcup_{n=2}^\infty S_n$ is denoted by~$J_L$.

Now, put $L=\bigl\{0\times y\colon 0\leq y\leq1 \bigr\}$. Obviously, $L\sqcup J_L$ is the closure of $J_L$ in~$[0,2]\times[0,1]$. The set $L\sqcup J_L$ is homeomorphic to the~topologist's sine curve.

We denote the set $a_0\cup \left(J\setminus J_L\right)$ by $J_R$ . Put $R=\bigl\{2\times y\colon 0\leq y\leq1 \bigr\}$; then $R\sqcup J_R$ is the closure of $J_R$ in~$[0,2]\times[0,1]$. Moreover, $R\sqcup J_R$ is homeomorphic to~the~topologist's sine curve.

Finally, put $X=L\cup J\cup R$. Trivially, $X$ belongs to $\mathcal{C}$ with $L, R$ being nondegenerate and disjoint. It is easy to see that $X$ is homeomorphic to the so-called double topologist's sine curve.

We define a map $\pi_{R}\colon J\to R$ to be the natural projection from~$J$ onto~the~remainder $R$, i.e., the $\pi_R$-image of $x\times\varphi(x)$ is $2\times\varphi(x)$ for~every $x\in(0,2)$. Analogously, $\pi_{L}\colon J\to L$ is the natural projection from $J$ onto~the~remainder $L$, i.e., the $\pi_L$-image of $x\times\varphi(x)$ is $0\times\varphi(x)$  for~every $x\in (0,2)$.

For any interval $K\subseteq (0,2)$ put $X_K=\left\{\Phi(x)\colon x\in K\right\}$. Further, for $a=\Phi(s)  < \Phi(t)=b$ from $J$, $[a,b]$ denotes the set $\left\{\Phi(x)\colon x\in [s,t]\right\}$.

\smallskip
\emph{Step~2. Definition of $f_L^R\colon L\sqcup R\to L\sqcup R$.}
Fix $M\in\mathcal{M^*}\left(L; R\right)$ and take a~continuous map $f_M\colon L\sqcup R\to L\sqcup R$ such that $\left(M,f_M\vert_M\right)$ is a minimal dynamical system. In this step, we construct a continuous and surjective map $f_L^R\colon L\sqcup R\to L\sqcup R$ that extends $f_M\vert_M$.
\smallskip

The space $X$ has three path components: $J, L, R$ (see~Remark~\ref{P:P-C}\ref{it:P-C}). Since the~distance between $L$ and $R$ is positive, Lemma~\ref{L:permutation} (applied to the space $L\sqcup R$ and the map $f_M$) yields that $f_M\left(L\right)\subseteq R$ and $f_M\left(R\right)\subseteq L$. In particular, $f_M\left(M\cap L\right)\subseteq R$ and $f_M\left(M\cap R\right)\subseteq L$. By~the~Tietze extension theorem there exist continuous maps $f_M^L\colon L\to R$ and $f_M^R\colon R\to L$ that extend $f_M\vert_{M\cap L}$ and $f_M\vert_{M\cap R}$, respectively. Since $L,R$ are arcs, without loss of generality, we can suppose that these maps are surjective. The map $f_L^{R}\colon L\sqcup R\to L\sqcup R$ defined by
\begin{equation*}
	f_L^{R}(x)=
	\begin{cases}
	f_M^L(x) & \text{if } x\in L,\\
	f_M^R(x) & \text{if } x\in R,
	\end{cases}
\end{equation*}
is well defined and continuous. Moreover, it is surjective and $f_L^{R}$ extends $f_M\vert_M$.

\smallskip
\emph{Step~3. Construction of points $a_n, b_n$ in $J$.}
\smallskip

Put $b_0=a_0$, and $i_0=1$. Inductively construct points $a_n, b_n$ ($n\geq 1$) in $J$ such that, for every $n\in\mathbb{N}$, 
\begin{enumerate}[label=(\roman*), ref=(\roman*)]
	\item\label{it:Pr_ind1} $a_n\in X_{(1,2)}, \quad a_{n-1}<a_n, \quad\dtaxi\left(a_{n-1},a_{n}\right)=1/n$;
	\item\label{it:Pr_ind2} $b_n^*=f_L^R\left(a_n^*\right)$ where $b_n^*=\pi_{L}\left(b_n\right)$ and $a_n^*=\pi_{R}\left(a_n\right)$;
	\item\label{it:Pr_ind3} $b_n\in X_{(0, 3/2]}\cap S_{i_n}$ for some $i_n\in\left\{i_{n-1}, i_{n-1}+1\right\}$;
	\item\label{it:Pr_ind4} $i_n=i_{n-1}+1$ if and only if $\dtaxi\left(b_{n-1},c_{n}\right)<1/i_{n-1}$ where $c_n=\pi_L^{-1}\left(b_n^*\right)\cap S_{i_{n-1}+1}$.
\end{enumerate}
A situation when $\dtaxi\left(b_{n-1},c_{n}\right)\geq1/i_{n-1}$ is illustrated in~Figure~\ref{fig:2TSCab}.
\begin{figure}[h]
	\centering
	\includegraphics[width=1\textwidth]{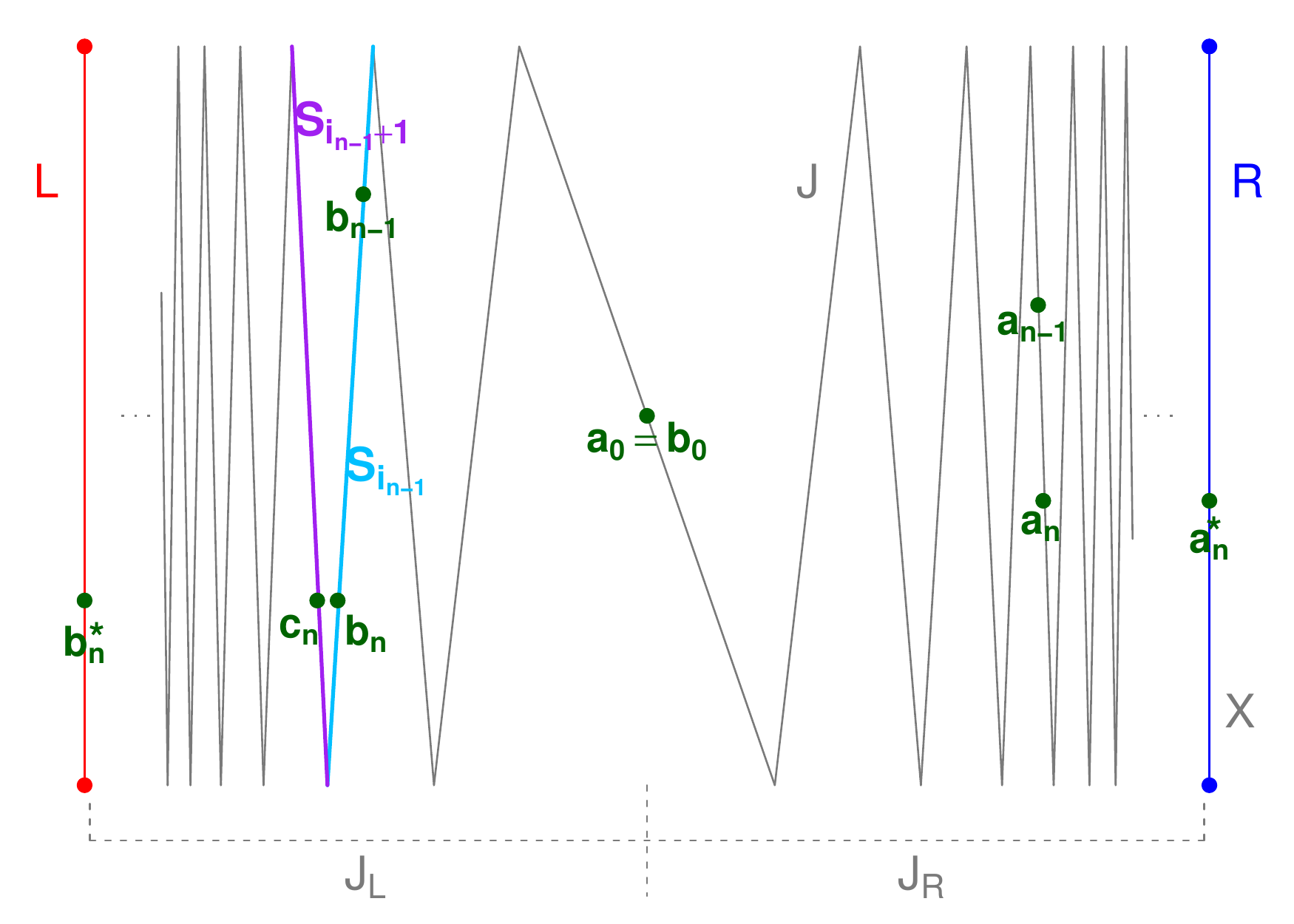}
	\caption[]{\label{fig:2TSCab} An illustration of the specific points.
	}
\end{figure}
Clearly, $\left(a_n\right)_{n=1}^\infty$ is uniquely defined by~\ref{it:Pr_ind1} and~$a_0$. Further, realize that, for any $b^*\in L$, $\pi_L^{-1}\left(b^*\right)$ intersects every $S_i$ in a single point. Thus~\ref{it:Pr_ind2}, \ref{it:Pr_ind3} and~\ref{it:Pr_ind4} uniquely define $\left(b_n\right)_{n=1}^\infty$. Note also that $b_n=c_n$ if the~condition from~\ref{it:Pr_ind4} is satisfied. Continuity of $f_L^R$ and $\pi_R$, together with~\ref{it:Pr_ind1} and~\ref{it:Pr_ind2}, yield

\begin{equation}\label{eq:Pr1.db_star_0}
\lim\limits_{n\to\infty}\deukl\left(b_n^*,b_{n-1}^*\right)=0.
\end{equation}

\smallskip
\emph{Step~4. Proof of $i_n\to\infty$ for $n\to\infty$.}
\smallskip

Suppose, on the contrary, that $i_n\not\to\infty$. Then, by~\ref{it:Pr_ind3}, there are $i,n_0\in\mathbb{N}_0$ such that

\begin{equation}\label{eq:Pr1.i_ni}
	i_n=i \text{ for every }n\geq n_0.
\end{equation}
Put $z=S_{i}\cap S_{i+1}$. Then, by~\ref{it:Pr_ind1} and surjectivity of $f_L^R$ and $\pi_R$, there is an increasing sequence $\left(n_k\right)_{k=1}^\infty$ of integers such that

\begin{equation}\label{eq:Pr1.b_star_z}
	\lim\limits_{n\to\infty} b^*_{n_k} = \pi_L(z).
\end{equation}
Now, by~\eqref{eq:Pr1.i_ni} and linearity of $\pi_L$ on the straight line segment $S_i$, 

\begin{equation}\label{eq:Pr1.b_z}
\lim\limits_{n\to\infty} b_{n_k} = z.
\end{equation}
As in~\ref{it:Pr_ind4} put $c_{n_k+1}=\pi_L^{-1}\left(b_{n_k+1}^*\right)\cap S_{i+1}$ for every $k$. Then $c_{n_k+1}\to z$ by~\eqref{eq:Pr1.db_star_0} and~\eqref{eq:Pr1.b_star_z}. So, analogously as above,
\begin{equation*}
\lim\limits_{n\to\infty} c_{n_k+1} = z.
\end{equation*}
This equation together with~\eqref{eq:Pr1.b_z} and~\ref{it:Pr_ind4} imply that, for every sufficiently large $k$, $i_{n_k+1}=i_{n_k}+1$, a contradiction with~\eqref{eq:Pr1.i_ni}. This proves that $\lim\limits_{n\to\infty}i_n=+\infty$.

\smallskip
\emph{Step~5. Definition of a continuous surjective map $f_J\colon J\to J$.}
\smallskip

Define $f_{J_R}\colon J_R\to J_L\cup S_1$ such that the following is true for every $n\in\mathbb{N}_0$:
\begin{itemize}
	\item $f_{J_R}\left(a_n\right)=b_n$;
	\item $f_{J_R}\vert_{\left[a_n,a_{n+1}\right]}$ is a piecewise linear surjection of $\left[a_n,a_{n+1}\right]$ onto $\left[b_n,b_{n+1}\right]$ with~constant slope.
\end{itemize}
Clearly, $f_{J_R}$ is uniquely defined and continuous. Since
\begin{equation}\label{eq:Pr1.a_b_0}
\lim\limits_{n\to\infty}\deukl\left(a_n,a_{n}^*\right)=0\quad \text{ and }\quad \lim\limits_{n\to\infty}\deukl\left(b_n,b_{n}^*\right)=0
\end{equation}
by Step~4 and~\ref{it:Pr_ind2}, $f_{J_R}\left(J_R\right) \supseteq J_L$.

Analogously construct $f_{J_L}\colon J_L\to J_R\cup S_1$ (with analogous choice of $\left(a_n^\prime\right)_{n=1}^\infty$, $\left(b_n^\prime\right)_{n=1}^\infty$) and define $f_J\colon J\to J$ by
\begin{equation*}
	f_J(x)=
	\begin{cases}
	f_{J_R}(x) & \text{if } x\in J_R,\\
	f_{J_L}(x) & \text{if } x\in J_L.
	\end{cases}
\end{equation*}
Clearly, $f_J\colon J\to J$ is continuous and surjective.

\smallskip
\emph{Step~6. Proof of uniform continuity of $f_J$.}
\smallskip

We first show that
\begin{equation}\label{eq:Pr1.b_0}
	\lim\limits_{n\to\infty}\dtaxi\left(b_n,b_{n-1}\right)=0.
\end{equation}
To this end, put $N_0=\left\{n\in\mathbb{N}\colon i_n=i_{n-1}\right\}$ and $N_1=\left\{n\in\mathbb{N}\colon i_n=i_{n-1}+1\right\}$. By~\ref{it:Pr_ind3}, $\mathbb{N}=N_0\sqcup N_1$. The set $N_1$ is infinite by Step~4 and, by~\ref{it:Pr_ind4},
\begin{equation*}
	\dtaxi\left(b_n,b_{n-1}\right)<1/i_{n-1} \text{ for every } n\in N_1,
\end{equation*}
hence
\begin{equation}\label{eq:Pr1.N10}
\lim\limits_{\substack{n\to\infty\\ n\in N_1}}\dtaxi\left(b_n,b_{n-1}\right)=0.
\end{equation}
Clearly, by~\eqref{eq:Pr1.db_star_0}, also $N_0$ is infinite. Take any $n\in N_0$ and put $i=i_n=i_{n-1}$. Since $\pi_L\vert_{S_i}\colon S_i\to L$ is linear with absolute value of the slope equal to $1/|S_i|$ (where $|S_i|$ denotes the Euclidean length of $S_i$),
\begin{equation*}
	\dtaxi\left(b_n,b_{n-1}\right) = |S_i|\cdot \deukl\left(b_n^*,b_{n-1}^*\right) \leq |S_1|\cdot \deukl\left(b_n^*,b_{n-1}^*\right).
\end{equation*}
Thus, by~\eqref{eq:Pr1.db_star_0},
\begin{equation}\label{eq:Pr1.N00}
\lim\limits_{\substack{n\to\infty\\ n\in N_0}}\dtaxi\left(b_n,b_{n-1}\right)=0.
\end{equation}
Since $\mathbb{N} = N_0\sqcup N_1$, \eqref{eq:Pr1.N10} and~\eqref{eq:Pr1.N00} yield \eqref{eq:Pr1.b_0}.

Now the proof of uniform continuity of $f_J$ is straightforward. To this end, take any sequence $\left(x_n\right)_{n=1}^\infty$ in $J$ converging to $x\in X$. If $x\in J$, then clearly $f_J\left(x_n\right)\to f_J(x)$ for $n\to\infty$. Otherwise, $x\in L\sqcup R$; we may assume that $x\in R$ and that every $x_n\in J_R$. Then for every $n$ there is $k_n\geq 0$ such that
\begin{equation*}
	x_n\in {\left[a_{k_n},a_{k_{n}+1}\right]}.
\end{equation*}
Then $k_n\to\infty$ since $x_n\to x\in R$, and so $a_{k_n}\to x$ and $\dtaxi\left(a_{k_n}, a_{k_n+1}\right)\to 0$ by~\ref{it:Pr_ind1}. Continuity of $f_L^R$ and $\pi_R$ now yields that, for $b^*=f_L^R(x)\in L$,
\begin{equation*}
	\lim\limits_{n\to\infty}f_L^R\left(\pi_R\left(x_n\right)\right) = \lim\limits_{n\to\infty}f_L^R\left(\pi_R\left(a_{k_n}\right)\right) = b^*.
\end{equation*}
This, together with~\ref{it:Pr_ind2}, \ref{it:Pr_ind3} and~Step~4, give $b_{k_n}\to b^*$. Now, since $f_J\left(x_n\right)\in f_J\left(\left[a_{k_n},a_{k_{n}+1}\right]\right) = \left[b_{k_n},b_{k_{n}+1}\right]$ and $\dtaxi\left(b_{k_n},b_{k_n+1}\right)\to0$ by~\eqref{eq:Pr1.b_0}, we have that $f_J\left(x_n\right)\to b^*=f_L^R(x)$.

We have proved that $f_J$ is continuously extensible to the entire compactum $X$, hence $f_J$ is uniformly continuous.

\smallskip
\emph{Step~7. Definition of a continuous surjective map $f\colon X\to X$.}
\smallskip

Define $f\colon X\to X$ by 
\begin{equation*}
f(x)=
\begin{cases}
f_L^R(x) & \text{if } x\in L\sqcup R,\\
f_J(x) & \text{if } x\in J.
\end{cases}
\end{equation*}
The map $f$ is surjective since $f_L^R\colon L\sqcup R\to L\sqcup R$ and $f_J\colon J\to J$ are such. Since $f_L^R$ and $f_J$ are continuous, to prove continuity of~$f$ it suffices to show that $f\left(x_n\right)\to f(x)$ for every $x\in L\sqcup R$ and every sequence $\left(x_n\right)_{n=1}^\infty$ in $J$ converging to $x$. To this end, fix any such $x$ and any $\left(x_n\right)_{n=1}^\infty$; we may assume that $x\in R$. By~\ref{it:Pr_ind1}, there is a~subsequence $\left(a_{k_n}\right)_{n=1}^\infty$ of $\left(a_k\right)_{k=1}^\infty$ such that $a_{k_n}\to x$. Then $b_{k_n}^* = f_L^R\left(\pi_R\left(a_{k_n}\right)\right) \to f_L^R\left(x\right) = f(x)$. Further, by~\eqref{eq:Pr1.a_b_0}, $\deukl\left(b_{k_n},b_{k_n}^*\right)\to 0$. Hence $f\left(a_{k_n}\right) = b_{k_n}\to f(x)$. Since $\deukl\left(x_{n},a_{k_n}\right)\to 0$ and $f_J$ is uniformly continuous by~Step~6, we have that $\deukl\left(f\left(x_{n}\right),f\left(a_{k_n}\right)\right) =\deukl\left(f_J\left(x_{n}\right),f_J\left(a_{k_n}\right)\right)\to 0$. Thus $f\left(x_n\right)\to f(x)$.

\smallskip
\emph{Step~8. Proof that $\mathcal{M}(X) = \mathcal{M}(J)\sqcup \mathcal{M}\left(L\right)\sqcup \mathcal{M}\left(R\right)\sqcup \mathcal{M^*}\left(L; R\right)$.}
\smallskip

In~Steps~2-7 we have proved that every $M\in\mathcal{M^*}\left(L; R\right)$ belongs to~$\mathcal{M}(X)$. The~rest follows from~Theorem~\ref{V:3}.

\subsection{A continuum with equality in~Theorem~\ref{V:3}(\emph{\ref{it:V3sup}})}\label{Sec:Pr2}
In this subsection we show the existence of a continuum for which the equality in~Theorem~\ref{V:3}(\emph{\ref{it:V3sup}}) (or, equivalently, in~Theorem~\ref{V:Main}\emph{(\ref{it:supseteq})}) holds. We use the following result:

\begin{lemma}[\cite{Awa93}, Theorem 4.7]\label{L:Awa93}
	There exists an uncountable collection $\mathcal{S}$ of~cardinality $\mathfrak{c}$ of compactifications of the ray with the arc as~remainder, no member of which maps continuously onto any other.
\end{lemma}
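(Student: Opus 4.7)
The plan is to construct $\mathfrak{c}$ many compactifications of the ray with arc remainder and then certify their pairwise incomparability via a topological invariant that is preserved under arbitrary continuous surjections. I would work in the plane, letting the remainder always be the fixed arc $R = \{0\}\times[0,1]$, and parameterising the compactification by the approach pattern of the ray to $R$.

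For the \emph{construction}, for each $\alpha$ in a parameter space $A$ of cardinality $\mathfrak{c}$, I would define a continuous injective map $\gamma_\alpha \colon [0,\infty) \to (0,1]\times[0,1]$ whose image oscillates so that $X_\alpha = \overline{\gamma_\alpha([0,\infty))}$ (closure in $[0,1]\times[0,1]$) is a compactification of the ray with remainder $R$. The key is to design $\gamma_\alpha$ so that a topological invariant $O(X_\alpha) \subseteq R$ -- for instance, the set of those $p \in R$ at which arbitrarily small neighborhoods of $p$ in $X_\alpha$ contain infinitely many connected components of the trace of the ray -- equals a prescribed closed set $F_\alpha$ depending on $\alpha$. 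The oscillation heights and positions of $\gamma_\alpha$ are tuned so that accumulation points are exactly $F_\alpha$.

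For the \emph{rigidity}, first observe that the remainder $R$ is topologically intrinsic to $X_\alpha$, being the unique maximal closed subset disjoint from every dense free arc; hence any continuous surjection $f\colon X_\alpha \to X_\beta$ must satisfy $f(R) \subseteq R$, and in fact $f|_R$ must surject onto $R$ by a mild covering argument on the free interval. An arc-component-counting argument then shows $f(O(X_\alpha)) \supseteq O(X_\beta)$: a point $p \in O(X_\beta)$ has the defining property that near $p$ infinitely many components of the ray accumulate, and by continuity and surjectivity any preimage $q \in f^{-1}(p)$ must inherit the analogous property in $X_\alpha$, placing $q$ in $O(X_\alpha)$. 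It then remains to choose $\{F_\alpha\}_{\alpha \in A}$ so that for $\alpha\neq\beta$ no $F_\beta$ is contained in a continuous image of $F_\alpha$ under any self-map of $R$; this can be arranged by a transfinite recursion of length $\mathfrak{c}$, at stage $\alpha$ adjoining a new distinguishing point while avoiding the (at most $\mathfrak{c}$) continuous self-maps of $R$ applied to previously constructed sets.

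The main obstacle is that continuous surjections are much more flexible than homeomorphisms: they can collapse arcs and thereby potentially distort accumulation patterns, so $O$ must be defined via a property that is genuinely robust under collapsing. The cleanest way to immunise the invariant is to let $F_\alpha$ be a scattered or Cantor-like set tracked by Cantor--Bendixson rank or local arc-component count, and to verify -- via a careful preimage analysis on small metric balls in $X_\beta$ -- that these local counts cannot strictly decrease when pulled back along $f$. The incomparability step then reduces to a purely set-theoretic rigidity on the family $\{F_\alpha\}$ inside $[0,1]$, which is classical.
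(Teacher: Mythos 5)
This lemma is not proved in the paper at all: it is imported verbatim from \cite{Awa93} (Theorem~4.7) and used as a black box in Section~\ref{Sec:Pr2}, so there is no in-paper argument to compare your proposal against. Judged on its own merits, your outline has a fatal flaw at the step you defer to ``classical set-theoretic rigidity.''

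Your invariant is ultimately a closed set $F_\alpha\subseteq R\cong[0,1]$, and the incomparability of $X_\alpha$ and $X_\beta$ is reduced to the nonexistence of a continuous (surjective) self-map $g$ of $[0,1]$ with $g(F_\alpha)\supseteq F_\beta$. But under this relation \emph{any} two nonempty closed subsets of $[0,1]$ are comparable, so no family $\{F_\alpha\}$ of the required kind exists -- not of cardinality $\mathfrak{c}$, not even of cardinality $2$. Indeed, if $F_\alpha$ is uncountable it contains a Cantor set, which maps continuously onto $[0,1]$; extending by Tietze gives $g$ with $g(F_\alpha)=[0,1]\supseteq F_\beta$. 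If both sets are countable, then by Mazurkiewicz--Sierpi\'nski each is homeomorphic to an ordinal $\omega^{\gamma}\cdot n+1$, and these are \emph{totally} quasi-ordered by continuous surjectability; whichever of $F_\alpha,F_\beta$ has the larger Cantor--Bendixson characteristic maps continuously onto a copy of the other, and this partial map again extends to a self-map of $[0,1]$ (which can be made surjective by modifying it on an interval of the complement of the countable set, without changing its values on it). The same computation shows that Cantor--Bendixson rank yields at most $\aleph_1$ classes in any case, so it cannot produce $\mathfrak{c}$ many examples in ZFC. Consequently the transfinite recursion has no room to move, and any correct proof must use an invariant strictly finer than a closed subset of the remainder -- e.g.\ the combinatorial \emph{order} in which the oscillations of the ray visit neighborhoods of points of $R$, which is what Awartani's actual argument encodes. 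Two secondary gaps: the assertion that every preimage $q\in f^{-1}(p)$ of a point $p\in O(X_\beta)$ inherits the accumulation property is false as stated (the components clustering at $p$ may be images of components clustering at a \emph{different} preimage of $p$, so at best \emph{some} preimage lies in $O(X_\alpha)$); and $f(R)\subseteq R$ needs a genuine argument via path components and density of the ray (as in Lemmas~\ref{L:XdoRemainderu} and~\ref{L:permutation} of this paper), since a continuous surjection need not preserve your proposed characterisation of $R$ a priori.
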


Take distinct $X_L, X_R\in\mathcal{S}$ and write $$X_L=J_L\sqcup L,\qquad X_R=J_R\sqcup R$$ where $J_L$ ($J_R$) is a ray dense in $X_L$ ($X_R$) and $L$ ($R$) is a nowhere dense arc. We may assume that $X_L\cap X_R=\{x_0\}$ where $x_0$ is the unique end point of both $X_L$ and $X_R$. Identify $J_L$ with $(0,1/2]$ and $J_R$ with $[1/2,1)$ such that $L= \bigcap_{\varepsilon>0}\overline{(0,\varepsilon)}$ and $R =\bigcap_{\varepsilon>0}\overline{(1-\varepsilon,1)}$, where the symbols $\overline{(0,\varepsilon)}$ and $\overline{(1-\varepsilon,1)}$ denote the closures of the sets $(0,\varepsilon)$ and $(1-\varepsilon,1)$ in $X_L$ and $X_R$, respectively. Put $X=X_L\cup X_R$. Obviously, $X$ is from $\mathcal{C}$ with the dense free interval $J_L\cup J_R$ and nondegenerate disjoint remainders $L$ and $R$. With respect to the previous sections, denote $J_L\cup J_R$ by $J$. Let $\metric$ be a (compatible) metric on $X$.

We need to show that there is no minimal set $M$ on $X$ such that $M\in\mathcal{M}^*(L;R)$. Suppose, on the contrary, that there is such $M$. Fix a continuous map $f\colon X\to X$ with $\left(M,f\vert_M\right)$ being a minimal dynamical system. By~Remark~\ref{P:P-C}\ref{it:P-C}, $X$ has three path components: $L, R$ and $J=J_L\cup J_R$. The cardinality of $M\cap L$ is equal to~the~cardinality of $M\cap R$, because $M\in\mathcal{M}^*(L;R)$. By~Lemma~\ref{L:permutation}, $f(L)\subseteq R$ and~$f(R)\subseteq L$. Thus, by~Lemma~\ref{L:XdoRemainderu}, $f(J)\subseteq J$.

Take points $m_L\in M\cap L, m_R\in M\cap R$ and fix $\varepsilon>0$. Since $f$ is continuous, there is $\delta_1>0$ such that for a point $x_1\in J$ with~$\metric\left(x_1, m_L\right)<\delta_1$ the inequality $\metric\left(f(x_1), f(m_L)\right)<\varepsilon$ holds. Such point $x_1$ exists since $J$ is dense in $X$. This fact and continuity of $f$ together imply that there is also $\delta_2>0$ and a point $x_2\in J$ with~$\metric\left(x_2, m_R\right)<\delta_2$ and $\metric\left(f(x_2), f(m_R)\right)<\varepsilon$. The free interval $J$ is connected, therefore its image under the continuous map $f$ is also connected. Moreover, since~$\varepsilon>0$ is arbitrarily small, the distance between $f(J)$ and $R$ is equal to $0$ and the same is the distance between $f(J)$ and $L$. So, $f(J)=J$, and because $J$ is dense in $X$, $f(X)=f\left(\overline{J}\right)=\overline{J}=X$. Therefore $f$ is surjective. Now, $f(L)\subseteq R$ and $f(R)\subseteq L$ imply $f(L)=R$ and $f(R)=L$. Since the inclusions $L\subsetneq X_L\subsetneq J\cup L$ hold, $R\subsetneq f\left(X_L\right)\subseteq J\cup R$. Moreover, $X_L$ is a continuum, therefore its $f$-image is compact and connected, and so $f\left(X_L\right)=[c,1)\cup R$ for~some $c\in(0,1)$. It is easy to~see that $f\left(X_L\right)$ is homeomorphic to $X_R$. But this is a contradiction to the choice of $X_L$ and $X_R$.

%%%%%%%%%%%%%%%%%%%%%%    ACKNOWLEDGEMENTS   %%%%%%%%%%%%%%%%%%%%%%
\medskip
\noindent\emph{Acknowledgements.}
The author is very obliged to Vladim\'{i}r \v{S}pitalsk\'{y} for careful reading of the manuscript and many suggestions for improving the paper. Furthermore, the author thanks \v{L}ubom\'{i}r Snoha for useful discussions.
This work was supported by VEGA grant 1/0158/20.

%%%%%%%%%%%%%%%%%%%%%%    REFERENCES   %%%%%%%%%%%%%%%%%%%%%%

\end{document}